\newcommand{\nc}{\newcommand}
\nc{\ds}{\displaystyle}
\nc{\e}{\epsilon}
\nc{\dd}{\partial}
\nc{\td}{\dot{\theta}}
\nc{\yd}{\dot{Y}}
\nc{\pt}{p_\theta}
\nc{\ptz}{p_{\theta_0}}
\nc{\pto}{p_{\theta_1}}
\nc{\ptt}{p_{\theta_2}}
\theoremstyle{plain}
\newtheorem{theorem}{Theorem}[section]
\newtheorem{lemma}[theorem]{Lemma}
\newtheorem{prop}[theorem]{Proposition}
\newtheorem{corollary}[theorem]{Corollary}
\newenvironment{customthm}[1]
  {\innercustomthm}
  {\endinnercustomthm}
\theoremstyle{definition}
\newtheorem*{definition}{Definition}
\newtheorem{theorem*}[theorem]{Remark}
\begin{document}

\title{Chaotic dynamics of a bouncing coin}

\author{Ki Yeun Kim\\ 
\texttt{kkim97@illinois.edu}\\
\date{\vspace{-5ex}}
}

\maketitle
\begin{abstract}We study the dynamics of a bouncing coin whose motion is restricted to the two-dimensional plane. Such coin model is equivalent to the system of two equal masses connected by a rigid rod, making elastic collisions with a flat boundary. We first describe the coin system as a point billiard with a scattering boundary. Then we analytically verify that the billiard map acting on the two disjoint sets produces a Smale horseshoe structure. We also prove that any random sequence of coin collisions can be realized by choosing an appropriate initial condition. 
\end{abstract}
\section{Introduction}
A coin toss is often used as a method for choosing randomly between two options. It is also a familiar model of a random outcome when learning probability and statistics. However, the motion of a tossed coin is completely deterministic, meaning that given the set of initial conditions we can calculate the exact outcome using the laws of physics. Various literature on the dynamics of a coin explore these conflicting ideas. 

In 1986, Keller studied an ideal two-dimensional coin: a thin disk whose rotation axis is parallel to the floor. He proved that for a two-dimensional coin tossed and caught in hand before a collision, the probability of getting either heads or tails approaches .5 in the limit of possible initial velocities \cite{keller}. Diaconis, Holmes, and Montgomery showed that a three-dimensional coin, a disk with no restriction on its rotation axis, is slightly biased by .01 to land on the side it was held at the initial position \cite{diaconis}. If we allow the coin to bounce on the floor, the analysis of the qualitative behavior of the coin gets highly nontrivial. Vulovic and Prange carried out computer simulation to prove that a small perturbation of the initial velocities of a two-dimensional bouncing coin can lead to very different outcomes \cite{vulovic}. Strzalko et al. published a similar numerical result for a three-dimensional coin \cite{strzalko}. 

Another motivation for studying a coin comes from the billiards theory. In the 1960s, the billiard problem appeared in the context of statistical mechanics to study a system of spherical gas particles in a box to verify the Boltzmann ergodic hypothesis \cite{sinai}. More recently, non-spherical particles, which have rotational in addition to translational velocities, were studied to build a more realistic gas model \cite{cowan}. In this direction,
it became natural to consider the behavior of \lq\lq{}a billiard ball with two degrees of freedom\rq\rq{} \cite{baryshnikov2}. Before understanding the full dynamics of a two-dimensional billiard object moving in some general planar domains, we study the object interacting with a simple flat boundary. 

We consider an object consisting of two equal masses connected by a weightless rigid rod (Figure 3a). We assume the object moves under the influence of gravity and makes elastic collisions at the flat boundary. Note that this system can be viewed as an equivalent model to a system of a two-dimensional bouncing coin. 
We use this coin model since it gives us relatively convenient constants for our analysis. Using a different two-dimensional coin model such as an actual thin disk (the aforementioned Keller model) or a rod will not change the main results in this paper. In that case, we will only need to modify the moment of inertia of the system and a few other related constants.

In this paper, we analytically prove the existence of a chaotic behavior of this two-dimensional coin model. Unlike a realistic situation where a coin eventually rests, our coin does not stop bouncing. Therefore, we cannot discuss the randomness based on its final position. Instead, we study the infinite sequence of coin bounces. We prove that the two-dimensional bouncing coin can produce any infinite sequence of collisions. In this sense, we may consider the two-dimensional coin with collisions a good randomizer. 

 \begin{theorem}
Consider a two-dimensional coin consisting of two equal masses $m_L$ and $m_R$ connected by a weightless rod in the $XY$ plane (Figure 3a). Suppose the coin moves in $Y> 0$ under the influence of gravity g, and makes elastic collisions at $Y = 0$. If a collision of $m_L$ or $m_R$ is labeled by $L$ or $R$, then any infinite collision sequence of $L$\rq{}s and
$R$\rq{}s can be realized by choosing an appropriate initial condition.
\end{theorem}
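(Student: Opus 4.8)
The plan is to pass from the continuous motion to the discrete dynamics of collisions, exhibit a Smale horseshoe there, and then read the collision sequences off the horseshoe's symbolic dynamics. First I would reduce the model and build a return map. Between impacts the center of mass follows a parabola ($\ddot X=0$, $\ddot Y=-g$) and the rod angle $\theta$ turns at constant rate, while at an impact the wall exerts a vertical impulse at the contact point, so the horizontal velocity $\dot X$ of the center of mass and the total energy are both unchanged by a collision as well. Since the floor $Y=0$ is invariant under horizontal translation, the horizontal motion plays no role in deciding which endpoint strikes; so I fix $\dot X=0$ and normalize the energy, reducing to a three–dimensional flow. Its natural cross-section — ``an endpoint is touching $Y=0$'' — is two–dimensional, coordinatized for instance by $(\theta,\dot\theta)$ at the moment of contact ($Y_{cm}$ and $\dot Y_{cm}$ being then fixed by $\theta$ and the energy). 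The first-return map $F$ of this section is the \emph{collision map}; it has two branches, according to whether the next contact belongs to $m_L$ or to $m_R$, and these are the two disjoint sets referred to in the abstract. I would write $F$ down explicitly: the elastic rule (energy conservation together with reversal of the normal velocity of the contact point, the impulse acting with moment arm $r\cos\theta$ about the center of mass, $r$ the half-length of the rod) determines $\dot\theta$ and $\dot Y_{cm}$ just after impact in closed form, and integrating the ensuing parabola up to the next contact gives the rest.

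Next I would locate a horseshoe for $F$. The source of chaos is the lever arm of the impact: when the coin descends nearly flat with moderate spin, a small change in the incoming $(\theta,\dot\theta)$ is greatly amplified by the collision into the outgoing spin, so that $F$ stretches a short arc of incoming states across a band of outgoing states broad enough to contain configurations that will next be struck by $m_L$ and configurations that will next be struck by $m_R$. Accordingly I would produce two disjoint boxes $H_L,H_R$ in the section, with $H_L$ inside the ``$L$ next'' branch and $H_R$ inside the ``$R$ next'' branch, such that $F$ carries each $H_i$ across \emph{both} $H_L$ and $H_R$ as full-width strips while contracting the transverse direction — i.e.\ verify the Conley--Moser conditions (the correct crossing combinatorics together with invariant expanding/contracting cone fields). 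Here the point-billiard description pays off: in suitable coordinates the collision manifold is a dispersing, convex-outward scatterer, and the familiar invariant-cone estimates for dispersing billiards supply the hyperbolicity; one then only needs the two branches positioned so that each image strip runs the whole way across the pair of boxes, and, incidentally, that inter-collision times stay bounded below on $H_L\cup H_R$ so that the selected orbits genuinely bounce forever.

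Finally I would extract the sequences. With $F$ admitting a horseshoe on $H_L\cup H_R$, its invariant set carries a topological conjugacy to the full two-sided shift on the alphabet $\{L,R\}$, the symbol recording which box the section point occupies, that is, which of $m_L,m_R$ makes the contact. Given any one-sided word $w_1w_2w_3\cdots$, extend it arbitrarily to a bi-infinite word; the image of the latter under the inverse conjugacy is a section state whose orbit has exactly that itinerary, so in particular the coin makes collisions $w_1,w_2,w_3,\dots$ in order. Undoing the normalizations — choosing any horizontal velocity and rescaling to any desired energy — turns this section state into a bona fide initial condition $(X_{cm},Y_{cm},\theta,\dot X_{cm},\dot Y_{cm},\dot\theta)$ realizing the prescribed bounce sequence.

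The routine reductions aside, the real obstacle is the middle step: getting the collision map into a form clean enough both to pin down the two boxes and to prove the precise ``stretch across both'' geometry with honest cone inequalities and explicit constants (in particular the dispersing/curvature bound and the requirement that each image strip fully crosses the pair $H_L\cup H_R$). Everything surrounding it is bookkeeping or an appeal to the standard horseshoe and symbolic-dynamics machinery.
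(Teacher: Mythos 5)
Your outline follows the paper's strategy in broad strokes (reduce to a point billiard with return map in $(\theta,\dot\theta)$, build a Conley--Moser horseshoe on two disjoint boxes, read off the collision labels from the symbolic dynamics), but the step you set aside as ``the real obstacle'' is the entire mathematical content of the theorem, and the tools you propose for it would not carry it through. The hyperbolicity here is \emph{not} supplied by the standard invariant-cone estimates for dispersing billiards: between impacts the trajectory is a parabola under gravity, not a straight line, and the expansion does not come from curvature of the scatterer in the usual way. The paper instead works in a large-energy asymptotic regime, taking boxes $D_L$, $D_R$ of dimensions $O(1/E)\times O(1/\sqrt{E})$ centered at $\theta=\pi/2$ and $\theta=3\pi/2$ --- rod perpendicular to the floor, impact point under the center of mass --- and computes the Jacobian of the return map explicitly, finding entries of order $E$, $\sqrt{E}$, $\sqrt{E}$, $1$. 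Note that this is essentially the opposite of your ``descends nearly flat'' regime: a nearly flat coin corresponds to $\theta\in\pi\mathbb{Z}$, which is the corner singularity of the billiard domain $y\ge|\sin\theta|$ where both masses strike simultaneously and the return map is undefined; the paper's boxes are chosen precisely to stay away from it. The stretching mechanism is the $O(\sqrt{E})$ impulse multiplied by a moment arm that vanishes at $\theta=\pi/2$ but varies at unit rate, giving $\partial\dot\theta_+/\partial\theta=O(\sqrt{E})$ and then $O(E)$ expansion after the $O(\sqrt{E})$ flight time.

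The second, and sharper, gap is that the ``each box stretches fully across both boxes'' geometry is not automatic and in fact fails for generic box sizes: if the boxes are slightly too small, the image of $D_L$ only clips the corners of the target rectangles (the paper's Figure 4a) and no full-width crossing of both boxes occurs, which would leave you only the alternating itineraries. The paper resolves this by tuning the box half-width $k(E)/E$ through an implicit equation ($A(E,k)=0$ in Lemma 4.2, established by the intermediate value theorem and the implicit function theorem), which forces the image of the corner of $D_L$ to land exactly a $\theta$-distance $\pi$ away. With that choice, $\tilde f(\widetilde D_L)$ wraps around the $\theta$-cylinder and crosses $\widetilde D_L$ once and $\widetilde D_R$ twice, producing six horizontal strips in all; the resulting conjugacy is therefore to a six-symbol subshift of finite type, not to the full $2$-shift on $\{L,R\}$ that you assert, and Theorem 1.1 follows only after projecting the six symbols onto the classes $[L]=\{L_1,L_2,L_3\}$ and $[R]=\{R_1,R_2,R_3\}$ and checking from the transition rules that every $[L][R]$-word is realized. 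So to complete your argument you would still need to (i) locate the boxes with explicit $E$-dependent dimensions, (ii) prove the quantitative landing statement that makes the images cross both boxes in full-width strips, and (iii) verify the Conley--Moser slope and width conditions from the Jacobian asymptotics; none of this follows from general horseshoe or dispersing-billiard machinery. (Your remark that inter-collision times must be bounded below is a good one, and is automatic in this regime since the flight time is $O(\sqrt{E})$.)
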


In order to prove the theorem, we consider the coin system as a point billiard. On the configuration space of the coin, the coin reduces to a mass point moving in a domain with a scattering boundary (Figure 3b). Then we show that a {Smale horseshoe} \cite{smale} is embedded in the billiard map $\tilde f$. Once we have a horseshoe, we will use the fact that the horseshoe map creates one-to-one correspondence between an infinitely long symbolic sequence to a phase point to complete the proof. 

One way to verify that a certain map contains a horseshoe is by showing that the map satisfies the Conley-Moser conditions \cite{moser}, which are a combination of geometric and analytic criteria. If met, these conditions confirm that the stable manifolds and the unstable manifolds of the invariant set transversally intersect, which directly implies the existence of a horseshoe. In Section 4, we use the coin billiard map $\tilde f$ to construct the horizontal and vertical strips satisfying the Conley-Moser conditions. 

Although there are many naturally occuring systems in science and engineering that are proven to contain a horseshoe \cite{levi, holmes}, to the best of our knowledge, all the studied horseshoe maps are defined on one connected domain. For our coin system, we iterate $\tilde f$ on the union of two disjoint rectangles $\widetilde D=\widetilde D_L \cup \widetilde D_R$. The points in $\widetilde D_L$ or $\widetilde D_R$ respectively correspond to collisions of $m_L$ or $m_R$. We show that $\tilde f$ takes each rectangle into a long and thin strip and wraps around the cylinder as illustrated in Figure 1a. From the figure, we see that $\tilde f(\widetilde D) \cap \widetilde D$ are the six horizontal strips, which we denote by $H_s$ where $s \in S=\{L_1, L_2, L_3, R_1, R_2, R_3\}$. If we iterate $\tilde f$ backward, then $\tilde f^{-1}(\widetilde D) \cap \widetilde D$ are the vertical strips $V_s$ where $s \in S$, as shown in Figure 1b. 

\begin{figure}
\centering
\includegraphics*[width=3.05in ]{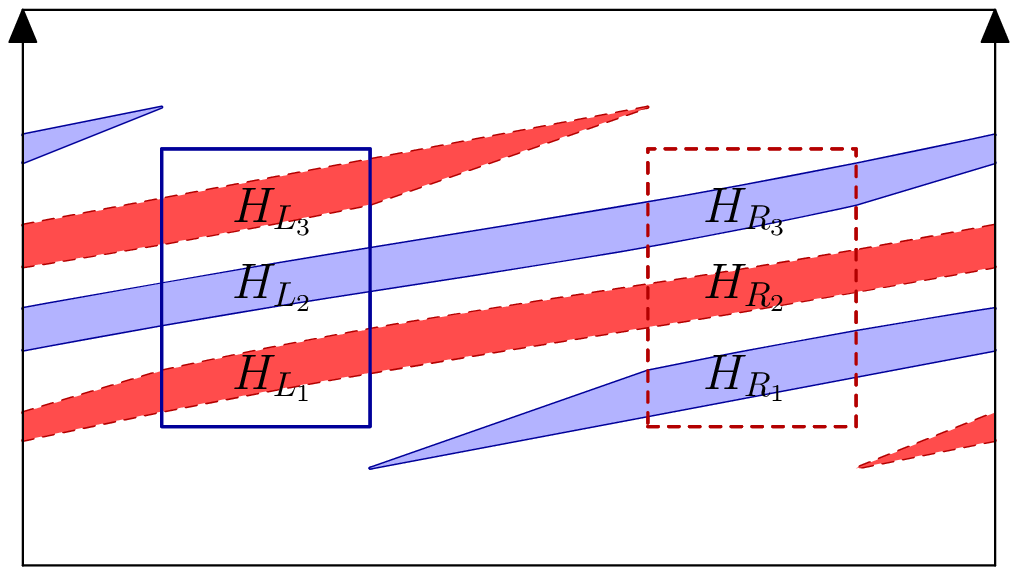}\includegraphics*[width=3.05in ]{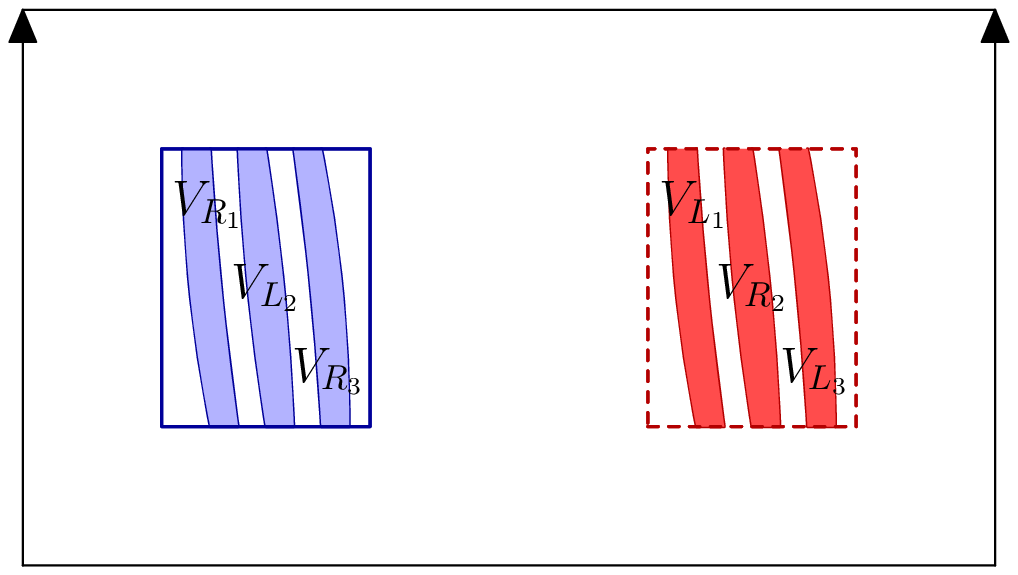}
\caption{(a) The two rectangles $\widetilde D_L$ (solid) and $\widetilde D_R$ (dashed) and their first images under the billiard map $\tilde f$. 
 (b) Vertical strips $V_s$ which are the inverse images of $\widetilde D_L$ and $\widetilde D_R$.}
\end{figure}
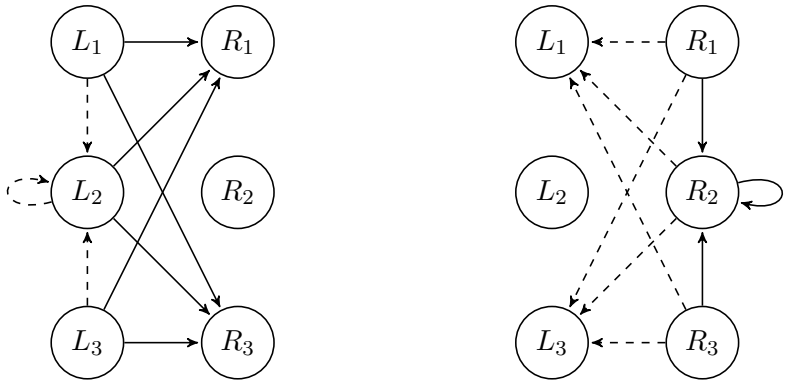
\begin{figure}
\centering
\begin{tikzpicture}[->,>=stealth',shorten >=1pt,auto,node distance=2cm, semithick]

  \node[state] (A)                    {$L_1$};
  \node[state]         (B) [below of=A] {$L_2$};
  \node[state]         (C) [below of=B] {$L_3$};
  \node[state]         (D) [right of=A] {$R_1$};
  \node[state]         (E) [below of=D]       {$R_2$};
  \node[state]         (F) [below of=E]       {$R_3$};

  \path (A) edge             node{} (D)
            edge              node {} (F)
            edge[dashed]              node {} (B)
        (B) edge [loop left, dashed] node {} (B)
            edge              node {} (D)
            edge              node {} (F)
        (C) edge              node {} (F)
            edge[dashed]   node {} (B)
             edge   node {} (D);
\end{tikzpicture} \hspace{3cm}
\begin{tikzpicture}[->,>=stealth',shorten >=1pt,auto,node distance=2cm, semithick]

  \node[state] (A)                    {$L_1$};
  \node[state]         (B) [below of=A] {$L_2$};
  \node[state]         (C) [below of=B] {$L_3$};
  \node[state]         (D) [right of=A] {$R_1$};
  \node[state]         (E) [below of=D]       {$R_2$};
  \node[state]         (F) [below of=E]       {$R_3$};

  \path (D) edge[dashed]             node{} (A)
            edge              node {} (E)
            edge[dashed]              node {} (C)
        (E) edge [loop right] node {} (E)
            edge[dashed]              node {} (A)
            edge [dashed]             node {} (C)
        (F) edge              node {} (E)
            edge [dashed]  node {} (A)
             edge[dashed]   node {} (C);
\end{tikzpicture}

\caption{The valid letter combinations for the sequences in $\Sigma$.}
\end{figure}

We continue to iterate $\tilde f^n$ for $n \to \pm \infty$ and obtain the invariant Cantor set $\Lambda= \ds \cap_{\tiny{-}\infty}^{\infty} \tilde f^n (\widetilde D)$. To each point in $\Lambda$, we assign a unique bi-infinite sequence $s=(... s_{-2} s_{-1}s_0. s_1 s_2 ...)$ where $s_i \in S$, with some forbidden patterns in $s_i$\rq{}s arising from the overlapping nature of the strips. For example, $\tilde f(H_{L_1}), \tilde f(H_{L_2})$ or $\tilde f(H_{L_3})$ has a nonempty intersection with $H_{R_1}, H_{L_2}$ or $H_{R_3}$, but not with $H_{L_1}, H_{R_2}$ or $H_{L_3}$. We define $\Sigma$ be the set of bi-infinite sequences of six symbols $L_1, L_2, L_3, R_1, R_2, R_3$ with the following rules (Figure 2):\begin{itemize}
\item $L_1, L_2$, or $L_3$ can precede $R_1, L_2$, or $R_3$.  \vspace{-.2cm}
\item $R_1, R_2$, or $R_3$ can precede $L_1, R_2$, or $L_3$.
\end{itemize}
We remark that if we set the representatives $[L]=\{L_1, L_2, L_3\}$ and $[R]=\{R_1, R_2, R_3\}$ and rewrite sequences in $\Sigma$ using $[L]$\rq{}s and $[R]$\rq{}s, then $\Sigma$ contains all possible infinite sequences generated by $[L]$ and $[R]$. It is easy to see that this implies Theorem 1.1. 

The formal horseshoe construction is expressed as:
\begin{theorem}
There is a homeomorphism $\phi: \Lambda \to \Sigma$ such that if we denote the shift map on $\Sigma$ by $\sigma: \Sigma \to \Sigma$, then the diagram below commutes.
\[
\large
\begin{tikzcd}[column sep=large,row sep=large]
\Lambda \arrow[swap]{d}{\phi}\arrow{r}{\tilde f} & \Lambda  \arrow{d}{ \phi} \\
\Sigma \arrow[swap]{r}{\sigma} & \Sigma
\end{tikzcd}
\]
\end{theorem}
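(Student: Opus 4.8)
The plan is to construct the conjugacy $\phi$ in the standard way for horseshoe maps, adapting the classical argument (as in Moser or Wiggins) to the disconnected domain $\widetilde D = \widetilde D_L \cup \widetilde D_R$ and to the subshift determined by the transition rules of Figure 2. First I would record the geometric facts established in Section 4: $\tilde f$ maps $\widetilde D$ across itself so that $\tilde f(\widetilde D)\cap\widetilde D$ consists of the six horizontal strips $H_s$, $\tilde f^{-1}(\widetilde D)\cap\widetilde D$ consists of the six vertical strips $V_s$, and the Conley--Moser conditions hold — in particular $\tilde f$ maps each $V_s$ homeomorphically onto $H_s$ with the vertical boundaries going to vertical boundaries and with the appropriate hyperbolic contraction/expansion (the cone and contraction estimates). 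The key consequence I would extract is that for any admissible finite word the corresponding intersection of horizontal (resp. vertical) strips is again a nonempty ``horizontal'' (resp. ``vertical'') strip whose width shrinks geometrically.

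The core of the proof is then the coding map. For a bi-infinite admissible sequence $s=(\dots s_{-1}s_0.s_1s_2\dots)\in\Sigma$ I would define, for each $n\ge 1$,
\[
H_{s_0 s_1\cdots s_n}=H_{s_0}\cap\tilde f(H_{s_1})\cap\cdots\cap\tilde f^{\,n}(H_{s_n}),\qquad
V_{s_{-1}\cdots s_{-n}}=V_{s_{-1}}\cap\tilde f^{-1}(V_{s_{-2}})\cap\cdots\cap\tilde f^{-(n-1)}(V_{s_{-n}}).
\]
Using the Conley--Moser strip lemma, each $H_{s_0\cdots s_n}$ is a nonempty horizontal strip and these are nested decreasing in $n$ with vertical width $\to 0$, so $\bigcap_{n\ge1}H_{s_0 s_1\cdots s_n}$ is a single horizontal curve (a graph over the horizontal direction); similarly $\bigcap_{n\ge1}V_{s_{-1}\cdots s_{-n}}$ is a single vertical curve. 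These two curves meet in exactly one point because one is a horizontal and the other a vertical ``Lipschitz graph'' inside the rectangle containing $s_0$; call this point $\phi^{-1}(s)$. One checks $\phi^{-1}(s)\in\Lambda$ since by construction $\tilde f^k(\phi^{-1}(s))$ lies in the rectangle labeled $s_k$ for every $k\in\mathbb Z$, and conversely every point of $\Lambda$ produces an admissible itinerary (admissibility is exactly the statement that $\tilde f(H_{s_k})\cap H_{s_{k+1}}\ne\varnothing$, which is the content of Figure 2), so $\phi:\Lambda\to\Sigma$, $\phi(x)=(\dots s_{-1}s_0.s_1\dots)$ with $s_k$ the label of the rectangle containing $\tilde f^k(x)$, is a well-defined bijection with the displayed inverse.

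It remains to verify that $\phi$ is a homeomorphism and that it intertwines $\tilde f$ with $\sigma$. The commutation is immediate from the definition: if $x$ has itinerary $(s_k)_{k\in\mathbb Z}$ then $\tilde f(x)$ has itinerary $(s_{k+1})_{k\in\mathbb Z}$, i.e. $\phi\circ\tilde f=\sigma\circ\phi$ on $\Lambda$. For continuity I would give $\Sigma$ the product topology (equivalently the metric $d(s,t)=\sum 2^{-|k|}\delta(s_k,t_k)$) and observe that $\phi$ is continuous because agreement of itineraries on $|k|\le n$ forces $x$ and $x'$ to lie in a common strip $H_{s_0\cdots s_n}\cap V_{s_{-1}\cdots s_{-n}}$ of diameter $O(\lambda^{n})$; continuity of $\phi^{-1}$ follows from the same nested-strip estimate, and since $\Lambda$ is compact and $\Sigma$ Hausdorff, $\phi$ is a homeomorphism. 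Finally I would note the remark already made in the text — rewriting each $s_k$ as its class $[L]$ or $[R]$ turns $\Sigma$ onto the full shift on two symbols — so that Theorem 1.1 follows by taking any prescribed sequence of $L$'s and $R$'s, lifting it to some admissible sequence in $\Sigma$ (always possible by the transition rules), and letting the initial condition be $\phi^{-1}$ of that sequence.

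The main obstacle I expect is not the abstract coding, which is classical, but making sure the disconnectedness of $\widetilde D$ and the specific $6\times 6$ transition pattern are handled cleanly: one must confirm that the Conley--Moser ``strip intersection'' lemma still applies when the horizontal strips of $H_s$ and $V_{s'}$ sit in different connected components, that the only constraint on itineraries is the nearest-neighbor rule encoded in Figure 2 (no longer-range obstructions), and that the hyperbolicity constants are uniform across both rectangles so the widths genuinely contract. Once those points are pinned down — essentially a bookkeeping refinement of Section 4 — the rest of the proof is the standard nested-intersection argument.
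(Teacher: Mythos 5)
Your proposal follows essentially the same route as the paper: the standard Conley--Moser nested-strip coding, using the six strips $H_s$, $V_s$ and the transition rules of Proposition 5.3 to build the subshift conjugacy, with the verification that $\phi$ is a homeomorphism deferred (as in the paper) to the classical references. The only discrepancy is bookkeeping: your indexing attaches the forward itinerary to the vertical-strip intersections $V_{s_{-1}\cdots s_{-n}}$ and the backward itinerary to $H_{s_0\cdots s_n}$, the reverse of the paper's convention ($H_{s_{-n}\cdots s_{-1}}$ for the past, $V_{s_0\cdots s_n}$ for the future), so your formula $\phi(x)=(s_k)_{k\in\mathbb Z}$ with $s_k$ the label of the strip containing $\tilde f^{k}(x)$ needs its signs reconciled with your construction of $\phi^{-1}$ --- a relabeling that does not affect the substance of the argument.
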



The plan of the paper is the following. We will introduce the coin model, and study its collision dynamics in the language of billiards in Section 2. In Section 3, we recall necessary definitions and state the Conley-Moser conditions. In Section 4, we study the topological picture of the billiard map. The main construction of the horseshoe and the proofs of the main theorems will be presented in Section 5. 

\section{A two-dimensional coin system}
\begin{figure}[h]
\centering
\includegraphics*[width=5.2in ]{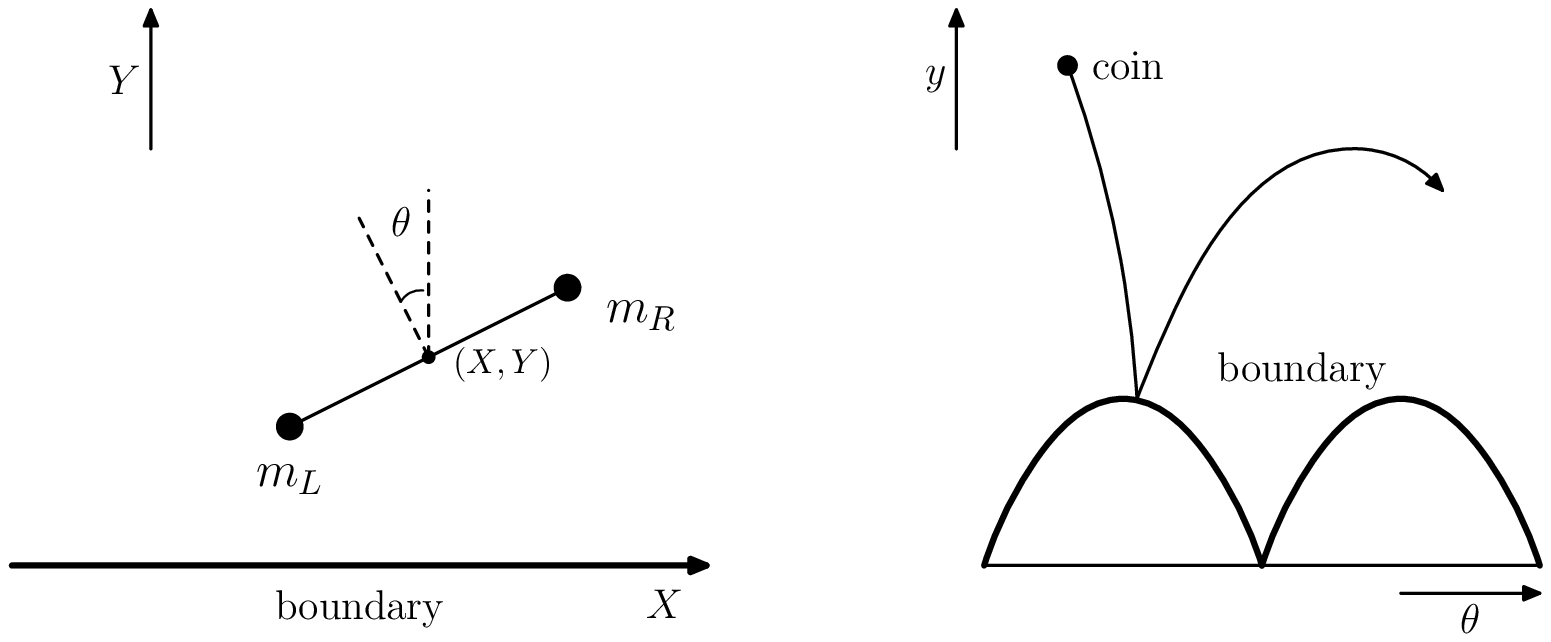}
\caption{(a) The two-dimensional coin model in the $XY$ plane. (b) The coin system as a billiard.}
\end{figure}

The two-dimensional coin model we consider consists of two equal point masses $m_L$ and $m_R$ connected by a weightless rigid rod of length $l$ in the $XY$ plane (Figure 3a). The coordinates of $m_L$, $m_R$, and the center of mass of the coin are denoted by $(X_L, Y_L), (X_R, Y_R)$, and $(X,Y)$, respectively. Let $\theta \in \mathbb{R}$ be the angular position of the normal vector of the coin measured counterclockwise from the positive $Y$-direction.  \begin{theorem*}
Although the coin looks physically the same when $\theta=\theta + 2\pi \mathbb{Z}$, we measure $\theta \in \mathbb{R}$ for now, keeping track of the rotation number. We will identify $\theta$ and $\theta + 2\pi \mathbb{Z}$ as the same in the later section.  
\end{theorem*}
We assume the coin moves under the influence of gravity $g$ in the half space $Y>0$ and reflects elastically at the boundary $Y=0$. The velocity of the center of mass in the $X$-direction is constant since there is no force acting on the system in that direction. Thus, we may assume that the center of mass does not move in the $X$-direction. With this reduction, the configuration space of the coin is $\mathbb{R}^2$ with the $(\theta,Y)$ coordinates. 

On the $\theta Y$ plane, the coin is a mass point in a transformed domain. Note that the coin hits the floor when $Y_L$ or $Y_R$ becomes zero. Therefore, using the relations
\begin{align*}
Y_L=Y- \frac{l}{2}\sin \theta \ge 0  \hspace{1in} Y_R=Y+ \frac{l}{2} \sin \theta \ge 0,
\end{align*}
we find the domain on the $\theta Y$ plane $\{(\theta, Y) : Y\ge \frac{l}{2}|\sin \theta|  \textrm{ for } \theta \in \mathbb{R}\}$. 

Now we examine the motion of the coin. Let $m=m_L+m_R$ be total mass of the coin. The moment of inertia of the coin is given by $I=m_L (\frac{l}{2})^2+ m_R (\frac{l}{2})^2=\frac{1}{4}ml^2$. It is convenient to rescale $Y=\sqrt{\frac{I}{m}} y$ and write the energy of the system as 
\begin{align}
E=\frac{I\dot{\theta}^2}{2} + \frac{m\dot{Y}^2}{2} + mgY= \frac{I\dot{\theta}^2}{2} + \frac{I\dot{y}^2}{2} + g\sqrt{mI}y,
\end{align}
since in this way we can use the standard mirror-like reflection law. By the Hamilton\rq{}s principle of least action, we know that the motion of the coin between two collisions on the $\theta y$ plane is governed by
 \begin{align}\ddot{\theta}=0 \hspace{1in} \ddot{y}=-g\sqrt{\frac{m}{I}}.
 \end{align} 
That is, the coin moves along a parabola between two collisions. We rescale the billiard domain accordingly and get 
\begin{align}
\mathcal Q = \{(\theta, y) : y\ge |\sin \theta|  \textrm{ for } \theta \in \mathbb{R}\}.
\end{align}

To simplify calculations, we choose $m=1, l=2$, thus $I=1$. The results in this paper stay true with different choices of numbers. As we mentioned in the introduction, we may also use a different two-dimensional coin model and perform the same analysis. Different models will change the moment of inertia of the system and a few other constants only. 

At the boundary $\dd \mathcal{Q}$, there will be a mirror-like reflection. We use the subscript $_-$ to denote the values right before a collision and $_+$ to denote the values right after a collision. Recall that given vectors $\mathbf v_-$ and $\mathbf n$, the reflection of  $\mathbf v_-$ across  $\mathbf n$ is given by 
\begin{equation*}
\mathbf{v}_{+} = -2 \frac{\mathbf v_-\cdot \mathbf n }{ \mathbf n \cdot \mathbf n} \mathbf n + \mathbf v_-.
\end{equation*} 
In our case, $\mathbf n$ is the normal vector to the boundary $\dd \mathcal Q$ when $\theta \neq n\pi$, and $\mathbf v_-$ is the incoming velocity
\begin{center}
$\mathbf n = \Big[-\cos \theta \frac{\sin\theta}{|\sin \theta|}, 1\Big]$ \hspace{1cm}
$\mathbf v_-= [\dot \theta_-, \dot y_-].$
\end{center}
Then the reflection law is
\begin{align}
\left(\begin{array}{c}
\dot{\theta}_{+}\\
\dot{y}_{+} 
\end{array}
\right) = 
\frac{1}{1+\cos^2 \theta}
\left(\begin{array}{cc}
\sin^2 \theta & 2\cos \theta \frac{\sin \theta}{|\sin \theta|}\\
2\cos \theta \frac{\sin \theta}{|\sin \theta|} & -\sin^2 \theta
\end{array}
\right)
\left(\begin{array}{c}
\dot{\theta}_-\\
\dot{y}_-
\end{array}
\right).
\end{align}

Using (2)-(4), we build a billiard for the coin system on the $\theta y$ plane. In the domain $\mathcal Q$ on the $\theta y $ plane, we represent the motion of the coin as the billiard flow in $\mathcal Q$. The flow is defined by the trajectory of the coin moving in a parabola until it reaches the boundary $\dd \mathcal Q$ and making a mirror-like reflection at $\dd \mathcal Q$. As in classical billiards, it is sufficient to examine the flow only at the moment of collision. We choose to observe the flow right before a collision. Thus, when there is no ambiguity, we drop the subscript $_{-}$ to simplify our notation. 

On the natural Poincar\'{e} section $\mathcal P$
\begin{align*}\mathcal{P}=\Big\{(\theta, y, \dot{\theta} , \dot{y} ) : (\theta, y) \in \partial \mathcal{Q}, \; \frac{\dot\theta^2}{2} + \frac{\dot y^2}{2} + gy=E\Big\},
\end{align*} we define the return map $f:\mathcal P \to  \mathcal P$, which takes the data of a collision to the data of the next collision.  
 A phase point on $\mathcal P$ can be identified with $(q, v)$ where $q$ is the footpoint on the boundary $\dd \mathcal{Q}$ and $v$ is the incoming velocity vector $v$ at $q$. 
If we restrict our attention to the flow pointing \lq\lq{}downwards\rq\rq{} at the moment of collision,
\[\mathcal P_d= \Big\{(q, v): (q,v) \in \mathcal P, v\cdot {\hat{y}}<0\Big\}, \hspace{.5cm} (\hat y \textrm{ is a vector in the positive } y \textrm{ direction})\] 
then we can use $(\theta, \dot \theta)$ as the coordinates on $\mathcal{P}_d$. Combining (1) and (3), we check that $(q, v)$ on $\mathcal P_d$ can be represented in terms of $(\theta, \dot \theta)$, 
\begin{align}
&q=(\theta, |\sin \theta|) \hspace{1cm} v=\left(\dot \theta, -\sqrt{2E - 2g |\sin \theta| - \dot{\theta}^2}\right).
\end{align}

When a point and its image under $f$ are both on $\mathcal P_d$, we write $f(\theta, \dot \theta)=(\theta_1, \dot \theta_1)$. In general, given $(\theta, \dot \theta)$, we cannot solve for $(\theta_1, \dot \theta_1)$. However, $(\theta_1, \dot \theta_1)$ can be implicitly found in the following way. First, we use (4) to write $(\theta_+, \dot \theta_+)$ in terms of $(\theta, \dot \theta)$, 
\begin{align}
\theta_+ =\theta  \hspace{1cm} \dot \theta_+ = \frac{1}{1+\cos^2 \theta} \left( \dot \theta \sin^2 \theta  - 2\sqrt{2E-2g |\sin \theta| - \dot \theta^2} \cos \theta \frac{\sin \theta}{|\sin \theta|}\right).
\end{align}
Given $(\theta_+, \dot \theta_+)$, we may use the basic laws of physics to determine the formula for the parabolic trajectory $y=\alpha(\theta-\beta)^2 + \gamma$ defined by $(\theta_+, \dot \theta_+)$. We may do the same with $(\theta_1, \dot \theta_1)$. Since $(\theta_+, \dot \theta_+)$ and $(\theta_1, \dot \theta_1)$ should define the same parabolic trajectory, we obtain
\begin{align}
&\theta_+ + \frac{\dot \theta_+ \sqrt{2E -2g |\sin \theta_+| - \dot \theta_+^2}}{g}=\beta= \theta_1 - \frac{\dot \theta_1 \sqrt{2E - 2g |\sin \theta_1| - \dot \theta_1^2}}{g} \\
&|\sin \theta_+| + \frac{2E -2g |\sin \theta_+| - \dot \theta_1^2}{2g}=\gamma=|\sin \theta_1| + \frac{2E - 2g|\sin \theta_1| - \dot \theta_1^2}{2g}. \notag
\end{align}
Combining (6) and (7) lets us implicitly define $(\theta_1, \dot \theta_1)$. 

\section{The Conley-Moser conditions}
In this section, we review the Conley-Moser conditions \cite{moser, wiggins}, which guarantee the existence of a horseshoe. We first need some definitions. Let $D=[X_0, X_1] \times [Y_0, Y_1] \subset \mathbb{R}^2$. 

 \begin{definition}
In $D$, a $\mu_h$-\textit{horizontal curve} is the graph of a function $Y=h(X)$ such that 
\begin{itemize}
\item[(1)] $Y_0\leq h(X) \leq Y_1$, and
\item[(2)] for every pair $X_i, X_j \in [X_0, X_1]$, we have $\ds \left|{h(X_i)-h(X_j)}\right| \leq \mu_h|{X_i-X_j}|$ for some $0\leq \mu_h$.
\end{itemize}
\end{definition}
 \begin{definition}
In $D$, a $\mu_v$-\textit{vertical curve} is the graph of a function $X=v(Y)$ such that 
\begin{itemize}
\item[(1)]$X_0 \leq v(Y) \leq X_1$, and
\item [(2)] for every pair $Y_i, Y_j \in [Y_0, Y_1]$, we have $\ds \left|{v(Y_i)-v(Y_j)}\right| \leq \mu_v|{Y_i-Y_j}|$ for some $0\leq \mu_v$.
\end{itemize}
\end{definition}
\begin{definition}
Given two non-intersecting horizontal curves $h_1(X) < h_2(X)$ in $D$, we define a $\mu_h$-\textit{horizontal strip} as
\begin{align*}
\;\;H&=\left\{(X,Y) : Y\in [h_1(X),h_2(X)] \textrm{ for } X\in \left[X_0, X_1 \right]\right\}.
\end{align*}
Given two non-intersecting vertical curves $v_1(Y) < v_2(Y)$ in $D$, we define a $\mu_v$-\textit{vertical strip} as 
\begin{align*}
V&=\left\{(X,Y) : Y\in [v_1(Y),v_2(Y)] \;\,\textrm{  for } Y\in \left[Y_0, Y_1 \right]\right\}.
\end{align*}
\end{definition}
\begin{definition}
The width of horizontal strips is defined as $d(H)=$ max $|h_2(X)-h_1(X)|$, and the width of vertical strips is defined as $ d(V)=$ max $|v_2(Y)-v_1(Y)|$.
\end{definition}
Consider a diffeomorphism $F:D\to \mathbb{R}^2$ and let $S=\{1,2, \dots N\}$ be an index set. Let $\bigcup_{s\in S} H_s$ be a set of disjoint $\mu_h$-horizontal strips, and let $\bigcup_{s \in S} V_s$ be a set of disjoint $\mu_v$-vertical strips. The \textit{Conley-Moser conditions} on $F$ are:

\begin{itemize}
\item[CM1] $0 \leq \mu_h\mu_v < 1$
\item[CM2]  $F$ maps $V_s$ homeomorphically onto $H_s$. Also, the horizontal (vertical) boundaries of $V_s$ get mapped to the horizontal (vertical) boundaries of $H_s$.
\item[CM3] Suppose $H$ is a $\mu_h$-horizontal strip contained in $\bigcup_{s\in S} H_s$, then $F(H) \cap H_s$ is a $\mu_h$-horizontal strip and $d(F(H) \cap H_s) < d(H)$. Similarly, if $V$ is a $\mu_v$-vertical strip contained in $\bigcup_{s\in S} V_i$, then $F^{-1}(V) \cap V_s$ is a $\mu_v$-vertical strip and $d(F^{-1}(V) \cap V_s) <d(V)$.
\end{itemize}

\begin{theorem}[Moser]
Suppose $F$ satisfies the Conley-Moser conditions. Then $F$ has an invariant Cantor set  $\Lambda$ on which it is topologically conjugate to a full shift on $N$ symbols, i.e. the  diagram below commutes, where $\phi: \Lambda \to \Sigma^N$ is a homeomorphism and $\sigma: \Sigma^N \to \Sigma^N$ is the shift map on the space of sequences of $N$ symbols. 
\[
\large
\begin{tikzcd}[column sep=large,row sep=large]
\Lambda \arrow[swap]{d}{\phi}\arrow{r}{F} & \Lambda  \arrow{d}{ \phi} \\
\Sigma^N \arrow[swap]{r}{\sigma} & \Sigma^N
\end{tikzcd}
\]

\end{theorem}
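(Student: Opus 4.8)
The plan is to run the classical symbolic‑coding argument: build the inverse conjugacy $\phi^{-1}\colon\Sigma^N\to D$ first, take $\Lambda$ to be its image, and then check that the resulting $\phi$ is a homeomorphism intertwining $F$ and $\sigma$. Fix a bi‑infinite sequence $s=(\dots s_{-1}s_0.s_1s_2\dots)\in\Sigma^N$ and unwind the desired itinerary condition ``$F^n(p)\in H_{s_n}$ for all $n\in\mathbb Z$''. The past part, $F^n(p)\in H_{s_n}$ for $n\le 0$, says $p\in H^{(k)}(s):=H_{s_0}\cap F(H_{s_{-1}})\cap\cdots\cap F^{k}(H_{s_{-k}})$ for every $k\ge 0$; using CM2 (which gives $V_s=F^{-1}(H_s)$) together with CM3 inductively, each $H^{(k)}(s)$ is again a $\mu_h$-horizontal strip spanning $[X_0,X_1]$, and the $H^{(k)}(s)$ are nested. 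Dually, the future part, $F^n(p)\in H_{s_n}$ for $n\ge 1$, is equivalent to $F^n(p)\in V_{s_{n+1}}$ for $n\ge 0$, i.e. $p\in V^{(k)}(s):=V_{s_1}\cap F^{-1}(V_{s_2})\cap\cdots\cap F^{-(k-1)}(V_{s_k})$ for every $k\ge1$; applying CM2--CM3 to $F^{-1}$, these are nested $\mu_v$-vertical strips.

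Next I would show that the nested intersections collapse to single curves. The widths $d(H^{(k)}(s))$ and $d(V^{(k)}(s))$ decrease strictly by CM3, and in the standard uniform formulation of that condition the decrease is geometric, so both tend to $0$. Passing to the limit, the uniformly $\mu_h$-Lipschitz functions bounding $H^{(k)}(s)$ converge to a single $\mu_h$-horizontal curve $Y=h_s(X)$, and likewise $V^{(k)}(s)$ shrinks to a single $\mu_v$-vertical curve $X=v_s(Y)$. By CM1 these two curves meet in exactly one point: a point lying on both satisfies $X=v_s(h_s(X))$, and $v_s\circ h_s\colon[X_0,X_1]\to[X_0,X_1]$ is a contraction with constant $\mu_v\mu_h<1$, so by the contraction mapping principle it has a unique fixed point. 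Define $\phi^{-1}(s)$ to be this point and set $\Lambda:=\phi^{-1}(\Sigma^N)$; by construction $\Lambda=\{p\in D: F^n(p)\in\bigcup_{s\in S}H_s\text{ for all }n\in\mathbb Z\}=\bigcap_{n\in\mathbb Z}F^n\big(\bigcup_{s\in S}H_s\big)$, a closed — hence compact — $F$-invariant subset of $D$.

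Then I would verify that $\phi\colon\Lambda\to\Sigma^N$, defined by letting $\phi(p)_n$ be the index $s$ with $F^n(p)\in H_s$, is well defined on $\Lambda$, is inverse to $\phi^{-1}$, and is continuous: if $s$ and $s'$ agree on the block $[-k,k]$, then $\phi^{-1}(s)$ and $\phi^{-1}(s')$ both lie in $H^{(k)}\cap V^{(k)}$ for the shared central block, a set of diameter $\to0$ as $k\to\infty$, so $\phi^{-1}$ is continuous; since $\Sigma^N$ is compact and $\Lambda$ is Hausdorff, $\phi^{-1}$ (and hence $\phi$) is a homeomorphism. The conjugacy $\phi\circ F=\sigma\circ\phi$ is immediate from the itinerary description, because $F^n(F(p))=F^{n+1}(p)\in H_{s_{n+1}}$. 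Finally, $\Sigma^N$ with the product topology is compact, perfect, and totally disconnected — a Cantor set — so its homeomorphic image $\Lambda$ is one as well.

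The one genuinely delicate point is the step where the strip widths go to zero rather than merely decrease: this is precisely where the Conley--Moser conditions encode uniform hyperbolicity, and it must be run with the quantitative (uniform‑rate) form of the width estimate in CM3, together with the limiting argument that identifies the nested strip intersections with honest Lipschitz graphs $h_s$, $v_s$. Everything else — the inductive strip bookkeeping from CM1--CM2, the contraction‑mapping uniqueness of the horizontal–vertical curve intersection, and the elementary topology of $\Sigma^N$ — is routine.
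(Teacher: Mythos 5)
Your proposal is a correct sketch of the standard Conley--Moser argument; the paper itself does not prove this theorem but simply cites Moser and Wiggins, whose proof is exactly the construction you outline (nested horizontal and vertical strips from the past and future itineraries, widths tending to zero, and the $\mu_h\mu_v<1$ contraction argument giving a unique intersection point, followed by the compactness argument that the continuous bijection $\phi^{-1}$ is a homeomorphism). Your caveat about the width estimate is well taken: CM3 as stated in this paper only requires $d(F(H)\cap H_s)<d(H)$, whereas the cited sources assume $d(F(H)\cap H_s)\le \nu\, d(H)$ for a uniform $0<\nu<1$, and it is this quantitative form that forces the nested strips to collapse to single Lipschitz curves and hence makes $\phi$ injective.
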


\begin{proof}
See \cite{moser, wiggins}. 
\end{proof}

\section{The topological picture of $f$}
We will study the topological picture of the return map $f$ which will be fundamental to constructing the horizontal and vertical strips satisfying the Conley-Moser conditions. We start by choosing an appropriate domain for $f$. 
\subsection{Construction of the domain $D$}
We naturally want to pick a domain which contains the phase points corresponding to collisions of $m_L$ and also the phase points corresponding to collisions of $m_R$ to study how the two parts interact. From the construction of $\mathcal P$, we see that the phase points $(\theta, \dot \theta)$ for $m_L$ satisfy $2n\pi < \theta < (2n\tiny{+}1) \pi$, and the phase points for $m_R$ satisfy $(2n\tiny{-}1)\pi < \theta < 2n\pi$. Also, note that $(\theta, \dot \theta) \in \mathcal P_d$ is not defined when $\theta = n \pi$, since such points are associated to the billiard trajectories hitting the corners of $\dd\mathcal Q = |\sin \theta|$. Thus, our domain should not include any points with $\theta = n\pi$. To embrace both conditions, it is apparent we must choose two disjoint subsets as a domain. 

Suppose we choose two rectangles of width $2\theta^*\tiny{<}\pi$ and height $2\dot \theta^*$ centered at $\big(\frac{\pi}{2}, 0\big)$ and $\big(\frac{3\pi}{2}, 0\big)$ as the domain. As a first step toward understanding the image of the rectangles, let us attempt to calculate the image of one of the corner points, $\big(\frac{\pi}{2}-\theta^*, \dot \theta^*\big)$. To find its image, we first need to know how $\dot \theta^*$ changes right after the reflection. By (6),
\[\dot \theta_+^* = \frac{1}{1+ \sin^2 \theta^*}\left( (1-\sin^2 \theta^*) \dot \theta^* - 2\sin \theta^* \sqrt{2E - 2g \cos \theta^* - (\dot \theta^*)^2}\right).\]
We can imagine that using the exact form of this equation in the next steps will cause complicated calculations. However, if $\theta^*$ is small and $(\dot \theta^*)^2$ is small compared to $2E$, then we can estimate the $\theta^*$ and $\dot \theta^*$, thus the equation as well, using asymptotics. 

We assume the energy of the system $E$ is large and set $\theta^* = O\big(\frac{1}{E}\big)$ and $\dot \theta^* =O\big(\frac{1}{\sqrt{E}}\big)$. 
We note that, with the bound $\theta^*=O\big(\frac{1}{E}\big)$, we only consider the pieces of the boundary $\dd \mathcal{Q}$ \lq\lq{}near\rq\rq{} the sine peaks. The restriction in $\dot \theta^*$ implies that the angle between the incoming billiard trajectory and $-\hat{y}$ at the moment of a collision is $O\big(\frac{1}{E}\big)$, i.e. the trajectory is \lq\lq{}close to vertical\rq\rq{}. Therefore, this setting guarantees that we will always have the transversal Poincar\`e section.

 Let $D$ be the union of  
\begin{align*} 
{D_L} &= \Big\{ (\theta, \dot\theta): (\theta, \dot \theta) \in {\mathcal P_d}, \Big|\theta-\frac{1}{2}\pi\Big| \leq \frac{k}{E}, |\dot\theta| \leq \frac{\sqrt{2}k}{\sqrt{E}}\Big\}\\
{D_R} &= \Big\{ (\theta, \dot\theta): (\theta, \dot \theta) \in  {\mathcal P_d}, \Big|\theta-\frac{3}{2}\pi\Big| \leq \frac{k}{E}, |\dot\theta| \leq \frac{\sqrt{2}k}{\sqrt{E}}\Big\}, 
\end{align*} where $k$ is a function of $E$ satisfying the equation (8) in Lemma 4.2 below, and $k$ is bounded by two independent constants $0<k_1<k<k_2$. 

\begin{theorem*}
We explain why we choose $k(E)$ in this specific way. We claim that the $k$ can be treated as a bifurcation point for topological pictures of $f$. 
\begin{enumerate}
\item If we instead use a constant $K$ slightly smaller than $k$ when defining the domain $D=D(K)$, then $f(D(K))$ intersects $D(K) \tiny {\pm} (2\pi\mathbb{Z}, 0)$ in two horizontal strips and four corners (Figure 4a). 

\item As $K$ approaches $k$, $f(D(K))$ becomes wider and flatter. When we use the exact $k$, then $f(D) \cap (D \tiny {\pm} (2\pi\mathbb{Z}, 0))$ results in six horizontal strips as in Figure 4b. These six horizontal strips are the \lq\lq{}minimal\rq\rq{} number of strips such that, later when we identify the angle of the coin $\theta=\theta + 2\pi \mathbb{Z}$, the image of each rectangle intersects itself \lq\lq{}and\rq\rq{} the other rectangle in full horizontal strips (Figure 1). 

\item With $K$ slightly bigger than $k$, $f(D(K))$ still intersects in $(D(K) \tiny {\pm} (2\pi\mathbb{Z}, 0))$ in six horizontal strips as in the case of $k$, but it is more tedious to compute the coordinates of $f(D(K))$. 

\item As $K$ gets bigger, the topological picture of $f(D(K))$ changes from Figure 4b to Figure 4c. 
\end{enumerate}

\end{theorem*}
\begin{figure}[h]
\centering
\includegraphics*[width=5.5in ]{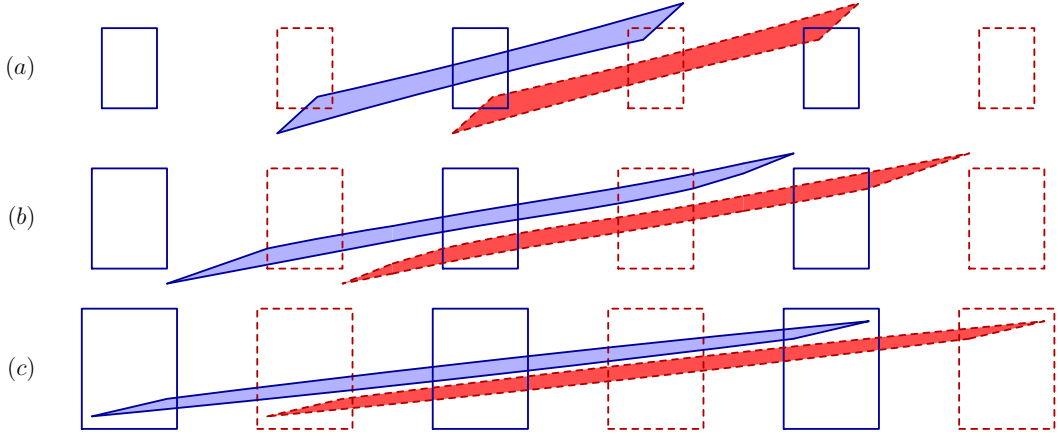}
\caption{(a) When $K<k$. \hspace{.2cm} (b) When using the exact $k$ defined in Lemma 4.2. \hspace{.2cm}(c) When $K>k$.}

\end{figure}
\begin{lemma}
Suppose we have a large enough energy of the system $E$. Then there exists $k=k(E)$ such that given a point $a=(\theta^a, \dot{\theta}^a)= \big(\frac{\pi}{2} - \frac{k}{E},\frac{\sqrt{2}k}{\sqrt{E}}\big) \in \mathcal P_d$, we have

\begin{align}A(E, k)=\frac{2}{g}\left(\dot\theta_+^a \sqrt{2E-2g |\sin \theta^a| -{(\dot{\theta}_+^a)}^2}\right)+\pi=0.
\end{align}
\end{lemma}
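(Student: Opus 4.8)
The plan is to treat (8) as a transcendental equation in the single scalar $k$ for each fixed large $E$, and to solve it by the intermediate value theorem. First I would write out the left-hand side $A(E,k)$ explicitly as a function of $k$ alone. At the point $a=(\theta^a,\dot\theta^a)=\bigl(\tfrac{\pi}{2}-\tfrac{k}{E},\tfrac{\sqrt 2 k}{\sqrt E}\bigr)$ we have $\sin\theta^a=\cos(k/E)$ and $\cos\theta^a=\sin(k/E)$, so $|\sin\theta^a|=\cos(k/E)$, and (6) gives
\[
\dot\theta_+^a=\frac{1}{1+\sin^2(k/E)}\Bigl((1-\sin^2(k/E))\,\tfrac{\sqrt 2 k}{\sqrt E}-2\sin(k/E)\sqrt{2E-2g\cos(k/E)-\tfrac{2k^2}{E}}\Bigr).
\]
Substituting this and $|\sin\theta^a|=\cos(k/E)$ into the definition of $A(E,k)$ yields a concrete (if unwieldy) expression. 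The key is the asymptotic regime $\theta^*=O(1/E)$, $\dot\theta^*=O(1/\sqrt E)$: here $k/E$ is small, so $\sin(k/E)=k/E+O(1/E^3)$, $\cos(k/E)=1+O(1/E^2)$, and the square root $\sqrt{2E-2g\cos(k/E)-2k^2/E}=\sqrt{2E}\,(1+O(1/E))$. Hence to leading order $\dot\theta_+^a\approx \tfrac{\sqrt 2 k}{\sqrt E}-\tfrac{2k}{E}\sqrt{2E}=\tfrac{\sqrt 2 k}{\sqrt E}-\tfrac{2\sqrt 2 k}{\sqrt E}=-\tfrac{\sqrt 2 k}{\sqrt E}$, i.e. the outgoing $\dot\theta$ essentially negates the incoming one (the reflection is nearly the standard mirror law since the trajectory is close to vertical). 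Then
\[
A(E,k)\approx \frac{2}{g}\Bigl(-\tfrac{\sqrt 2 k}{\sqrt E}\Bigr)\sqrt{2E}+\pi=-\frac{4k}{g}+\pi+O\!\left(\tfrac{1}{\sqrt E}\right),
\]
so the leading-order root is $k\approx \pi g/4$, a fixed positive constant independent of $E$.

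Next I would make this rigorous. Fix $E$ large. Consider $A(E,\cdot)$ as a continuous function of $k$ on an interval $[k_1,k_2]$ with $0<k_1<\pi g/4<k_2$ chosen once and for all (using the leading-order computation to see that such $k_1,k_2$ work uniformly in $E$). Evaluate the sign of $A(E,k_1)$ and $A(E,k_2)$: from the expansion $A(E,k)=-\tfrac{4k}{g}+\pi+R(E,k)$ with an error term $R$ bounded by $C/\sqrt E$ uniformly for $k\in[k_1,k_2]$, we get $A(E,k_1)>0$ and $A(E,k_2)<0$ for all $E$ sufficiently large. Since $k\mapsto A(E,k)$ is continuous (indeed smooth, being a composition of smooth functions — one should check the quantity under the square root, $2E-2g\cos(k/E)-2k^2/E$, stays positive on $[k_1,k_2]$ for large $E$, which is clear since it is $2E(1+o(1))$), the intermediate value theorem produces a root $k=k(E)\in(k_1,k_2)$ satisfying (8). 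This simultaneously gives existence and the claimed two-sided bound $0<k_1<k(E)<k_2$.

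One refinement worth including: to make $k(E)$ a well-defined function (rather than merely "some root") I would check that $\partial A/\partial k<0$ on $[k_1,k_2]$ for $E$ large, which follows because $\partial A/\partial k = -4/g+O(1/\sqrt E)$ from differentiating the expansion; strict monotonicity gives uniqueness of the root in $[k_1,k_2]$, hence a genuine function $k(E)$, and also shows $k(E)\to \pi g/4$ as $E\to\infty$. The main obstacle I anticipate is purely bookkeeping: carrying the $O(1/E)$ and $O(1/\sqrt E)$ error terms through the square root, through the matrix in (6), and into $A(E,k)$ with enough care that the error is genuinely uniform in $k$ over the compact interval $[k_1,k_2]$ — nothing conceptually deep, but one must be disciplined about which quantities are $O(1/E)$ versus $O(1/\sqrt E)$ and confirm the radicand never approaches zero. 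Geometrically, the content of the lemma is simply that the constant $\pi$ in (8) is the horizontal shift needed for the image parabola's apex to land exactly one full period $2\pi$ away (the $2/g$ factor converts the outgoing velocity into the horizontal displacement $\beta-\theta_+$ of the parabola from (7)), which is why $k(E)$ behaves like $\pi g/4$ and why it serves as the bifurcation value described in the preceding remark.
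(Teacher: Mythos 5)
Your proposal is correct and follows essentially the same route as the paper: expand $\dot\theta_+^a$ asymptotically to get $\dot\theta_+^a=-\tfrac{\sqrt 2 k}{\sqrt E}+O\big(\tfrac{k}{E^{1.5}}\big)$, deduce $A(E,k)=-\tfrac{4k}{g}+\pi+o(1)$, and apply the intermediate value theorem on a fixed interval around the leading-order root $k=\pi g/4$ (the paper uses $\big(\tfrac{g\pi}{8},\tfrac{g\pi}{2}\big)$). Your monotonicity argument via $\partial A/\partial k=-\tfrac{4}{g}+o(1)<0$ is the same nondegeneracy check the paper uses to invoke the implicit function theorem and define $k(E)$.
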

\begin{theorem*}
The equation $A(E,k)=0$ implies the following physical meaning: the billiard trajectory on $\mathcal Q$ defined by the initial condition $a=(\theta^a, \dot{\theta}^a)$ hits the boundary again when $\theta = \theta^a-\pi$ (Figure 6). In the phase space, this guarantees that a corner point $a \in D_L$ gets mapped to the left side of $D_R - (2\pi, 0)$ (Figure 5). 
\end{theorem*}
\begin{proof}
Recall from (5) that when $\dot \theta^a=\frac{\sqrt{2}k}{\sqrt{E}}$, we have $\dot y^a= -\sqrt{2E- 2g|\sin \theta^a|-(\dot \theta^a)^2}$. By (6), 
\begin{align} \dot \theta^a_+
&= \frac{1}{1+\sin^2\frac{k}{E}} \left(\left(1-\sin^2 \frac{k}{E}\right) \frac{\sqrt{2}k}{\sqrt{E}}-2\sin \frac{k}{E} \sqrt{2E - 2g\cos\frac{k}{E} - \Big(\sqrt{2E}\frac{k}{E}\Big)^2} \right)\notag\\
&= \frac{1}{1+\sin^2\frac{k}{E}} \left(\left(1-O\left(\frac{k^2}{E^2}\right)\right) \frac{\sqrt{2}k}{\sqrt{E}}-\left(\frac{2k}{E}-O\left(\frac{k^3}{E^3}\right)\right)\left( \sqrt{2E} - O\left(\frac{1}{E^{0.5}}\right) \right) \right)\notag\\
&=\frac{1}{1+\sin^2\frac{k}{E}} \left(-\frac{\sqrt{2}k}{\sqrt{E}}+O\left(\frac{k}{E^{1.5}}\right)\right)=-\frac{\sqrt{2}k}{\sqrt{E}} + O\left(\frac{k}{E^{1.5}}\right).
\end{align}
Then we substitute (9) to (8) 
\begin{align*}
A(E, k)&=\frac{2}{g}\left(\left(-\frac{\sqrt{2}k}{\sqrt{E}}+O\left(\frac{k}{E^{1.5}}\right)\right) \sqrt{2E-2g \cos \frac{k}{E} -\left(-\frac{\sqrt{2}k}{\sqrt{E}}+O\left(\frac{k}{E^{1.5}}\right)\right)^2} \right)+\pi\\
&=\frac{2}{g}\left(\left(-\frac{\sqrt{2}k}{\sqrt{E}}+O\left(\frac{k}{E^{1.5}}\right)\right) \left(\sqrt{2E}-O\left(\frac{1}{E^{1.5}}\right)\right) \right)+\pi\\
&=\frac{2}{g} \left(-2k+O\left(\frac{k}{E}\right)\right)+\pi.
\end{align*}
Since the term $O\big(\frac{k}{E}\big)$ is small, we may assume $A\left(E, \frac{g \pi}{2}\right)<0$ and $A\left(E, \frac{g \pi}{8}\right)>0$. 
By the intermediate value theorem, we can find $k \in \big(\frac{g \pi}{8}, \frac{g \pi}{2}\big)$ such that $A(E, k)=0$. Since the leading term of $\frac{\dd A(E, k)}{\dd k}=-\frac{2}{g}$, it follows that $\frac{\dd A(E, k)}{\dd k} \neq 0$. From the implicit function theorem, we know that we can write $k=k(E)$ in terms of $E$. 
\end{proof}

\subsection{The image of $D$ under the return map $f$}

We will use a few reference points in $D$ and the Jacobian approximation of $f$ to show that $f(D)$ are long and thin strips overlapping $D$ and $D\tiny{\pm}(2\pi, 0)$ as in Figure 5. Due to the symmetry of the coin, $f(D_R)$ will have the same topological picture as $f(D_L)$, shifted by $\pi$. Therefore, we may focus on the topological behavior of $f(D_L)$. We start by analyzing the image of the top edge of $D_L$ under $f$.

\begin{lemma} The image of the top edge of $D_L$ under $f$ is a $\mu_h$-horizontal curve with $\mu_h=O\big(\frac{1}{\sqrt{E}}\big)$.
\end{lemma}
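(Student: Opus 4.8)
The plan is to parametrize the top edge of $D_L$ by $\theta \in [\frac{\pi}{2} - \frac{k}{E}, \frac{\pi}{2} + \frac{k}{E}]$ with $\dot\theta = \frac{\sqrt2 k}{\sqrt E}$ fixed, track this one-parameter family through the collision law and the parabolic flight to the next collision, and show that the image can be written as a graph $\dot\theta_1 = h(\theta_1)$ with the stated Lipschitz bound. First I would apply the reflection formula (6) to get $(\theta_+, \dot\theta_+)$ along the edge; as in the computation of (9)--(10) in Lemma 4.2, the leading behavior is $\theta_+ = \theta$ and $\dot\theta_+ = -\dot\theta + O\big(\frac{1}{E^{3/2}}\big)$-type estimates, but now I must keep the \emph{$\theta$-dependence} rather than evaluating at a single corner, so I would carry the first-order Taylor expansion in $\theta$ about $\frac{\pi}{2}$. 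Then I would use the parabola relations (7): the new footpoint angle $\theta_1$ and velocity $\dot\theta_1$ are determined implicitly by matching $\beta$ and $\gamma$. Since $\ddot\theta = 0$, the horizontal coordinate evolves linearly, so the time of flight is governed by the $y$-parabola; solving for when $y$ returns to $|\sin\theta_1|$ and using $\theta_1 = \theta_+ + \dot\theta_+ \cdot (\text{time})$ gives $\theta_1$ and hence $\dot\theta_1$ as functions of the edge parameter.

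The key quantitative step is to estimate the two derivatives $\frac{d\theta_1}{d\theta}$ and $\frac{d\dot\theta_1}{d\theta}$ along the edge, since the Lipschitz constant $\mu_h$ of the image curve $\dot\theta_1 = h(\theta_1)$ is (up to the usual chain-rule bookkeeping) $\big|\frac{d\dot\theta_1/d\theta}{d\theta_1/d\theta}\big|$. The scaling hypotheses $\theta^* = O(\frac1E)$, $\dot\theta^* = O(\frac{1}{\sqrt E})$ are what make this tractable: the time of flight is $O(\sqrt E / g)$ (a near-vertical toss with speed $\sim \sqrt{2E}$), so a small change $d\theta$ in the launch point, combined with the $O(\frac{1}{\sqrt E})$ change it induces in $\dot\theta_+$, produces an $O(\sqrt E \cdot \frac{1}{\sqrt E}) = O(1)$-order change in $\theta_1$ — i.e. $\frac{d\theta_1}{d\theta}$ is of order a nonzero constant (this is the stretching that makes the strip long), while $\frac{d\dot\theta_1}{d\theta}$ stays $O(\frac{1}{\sqrt E})$ because $\dot\theta_1$ itself ranges over an interval of that size. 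Dividing gives $\mu_h = O(\frac{1}{\sqrt E})$. I would also need to confirm $\frac{d\theta_1}{d\theta}$ does not vanish on the edge, so that $h$ is genuinely a function; this follows from the leading-order constant being bounded away from zero for large $E$.

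The main obstacle I expect is the careful asymptotic bookkeeping in differentiating the implicit relations (7) with respect to the edge parameter: one has to differentiate through the square roots $\sqrt{2E - 2g|\sin\theta_i| - \dot\theta_i^2}$ and through the matching equations for $\beta$ and $\gamma$, and verify that every error term is genuinely lower order uniformly along the edge — in particular that no cancellation in the leading term of $\frac{d\theta_1}{d\theta}$ occurs. A clean way to organize this is to compute $\frac{\partial(\theta_1,\dot\theta_1)}{\partial(\theta_+,\dot\theta_+)}$ (the Jacobian of the flight-and-reflect map referenced in the next subsection) together with $\frac{\partial(\theta_+,\dot\theta_+)}{\partial\theta}$ along the edge, and multiply; the hypotheses force the Jacobian to be, to leading order, $\begin{pmatrix} O(1) & O(\sqrt E) \\ O(\frac{1}{\sqrt E}) & O(1)\end{pmatrix}$, from which the bound $\mu_h = O(\frac{1}{\sqrt E})$ drops out. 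Finally, one checks conditions (1) and (2) of Definition 3.1 hold on $D$ (or on $D \pm (2\pi,0)$, after the identification), namely that the image stays within the vertical extent of the target rectangles; this is exactly the content of Lemma 4.2, which places the endpoint of the edge on the correct side of $D_R - (2\pi,0)$.
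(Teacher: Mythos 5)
Your strategy is essentially the paper's: factor $f$ through the reflection map $(\theta,\dot\theta)\mapsto(\theta_+,\dot\theta_+)$ of (6) and the free-flight map $(\theta_+,\dot\theta_+)\mapsto(\theta_1,\dot\theta_1)$ obtained by implicitly differentiating (7), estimate the composite Jacobian asymptotically on $D_L$, push forward the edge tangent $(1,0)^T$, and read off the slope as the ratio of the image components. The conclusion $\mu_h=O\big(\frac{1}{\sqrt{E}}\big)$ is correct, and your remarks that one must check $\frac{d\theta_1}{d\theta}\neq 0$ (so the image is genuinely a graph) and that the vertical placement of the image is the business of Lemma 4.2 (in the paper, of Proposition 4.5) are both sound.

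However, your intermediate asymptotics are off by a factor of $E$ in the first column of the Jacobian. From (6), $\dot\theta_+$ contains the term $-2\dot y\cos\theta\,\frac{\sin\theta}{|\sin\theta|}/(1+\cos^2\theta)$ with $\dot y=-O(\sqrt{E})$, so near $\theta=\frac{\pi}{2}$ one gets $\frac{\partial\dot\theta_+}{\partial\theta}=O(\sqrt{E})$, not $O\big(\frac{1}{\sqrt{E}}\big)$; combined with the flight-time sensitivity $\frac{\partial\theta_1}{\partial\dot\theta_+}=O(\sqrt{E})$ this yields $\frac{d\theta_1}{d\theta}=O(1)+O(\sqrt{E})\cdot O(\sqrt{E})=O(E)$ and $\frac{d\dot\theta_1}{d\theta}=O(\sqrt{E})$, i.e.\ the Jacobian on $D_L$ is $\begin{pmatrix}O(E)&O(\sqrt{E})\\ O(\sqrt{E})&O(1)\end{pmatrix}$ (the paper's (11)) rather than your $\begin{pmatrix}O(1)&O(\sqrt{E})\\ O(1/\sqrt{E})&O(1)\end{pmatrix}$. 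The slip comes from conflating the total variation of a quantity across the edge with its derivative: $\theta_1$ does change by $O(1)$ in total, but the base interval has width only $2k/E=O\big(\frac{1}{E}\big)$, so the derivative is $O(E)$, and the inference ``the induced change in $\theta_1$ is $O(1)$, hence $\frac{d\theta_1}{d\theta}$ is of order a nonzero constant'' is a non sequitur. Because $\mu_h$ is the ratio of the two components of the image tangent vector, the spurious factor of $E$ cancels and your final bound survives; but the Jacobian as you state it is wrong, and the correct entries are what get reused later (in Propositions 5.2 and 5.3), so this should be repaired rather than waved through.
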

\begin{proof}
We first compute the Jacobian $\mathbf J_f$ for general $(\theta, \dot \theta)$, then estimate the transformation of a vector $(1,0)^T$ in $D_L$ by $\mathbf J_f$. 
From (6), we get the derivatives
\begin{align*}
\frac{\dd \theta_+}{\dd \theta_{\;\; \;}}&= 1 \hspace{1cm} \frac{\dd \dot \theta_+}{\dd \theta_{\;\;\;}}=\frac{-g\cos^2 \theta ( 3+\cos 2\theta) + 2\dot \theta \dot y \sin 2\theta- 2 \dot y^2 \sin^3\theta}{\dot y(1+\cos^2 \theta)^2}\notag\\ 
\frac{\dd \theta_+}{\dd \dot \theta_{\;\;\;}}&=0 \hspace{1cm} \frac{\dd \dot \theta_+}{\dd \dot \theta_{\;\;\;}}=\frac{1- \cos^2 \theta - (2\dot \theta \cos \theta)/\dot y}{1+ \cos^2 \theta}.
\end{align*}
By implicitly differentiating (7) and simplifying the results with 
\begin{align*}
\dot y=-\sqrt{2E\tiny{-}2g \sin \theta\tiny{-}\dot{\theta}^2} \hspace{.9cm} 
\dot y_+=\sqrt{2E\tiny{-}2g \sin \theta\tiny{-}\dot{\theta}_+^2}\hspace{.9cm}
\dot y_{1}=-\sqrt{2E\tiny{-}2g |\sin \theta_1|\tiny{-}(\dot{\theta}_1)^2},
\end{align*}we obtain 
\begin{align*}
\arraycolsep=-0pt\def\arraystretch{1.7}
\begin{array}{lrcl}
\ds\left(-1\pm \frac{ \dot \theta_1 \cos \theta_1}{\dot y_1}\right)\ds \frac{\dd \theta_1}{\dd \theta_+}-&
\ds \left(\frac{(\dot y_1)^2-(\dot \theta_1)^2}{g \dot y_1}\right)&\ds\frac{\dd \dot \theta_1}{\dd \theta_+}=&\;\ds
\frac{ \dot \theta_+ \cos \theta_+}{\dot y_+}-1\\
&\ds\frac{\theta_1}{g} \;\;&\ds \frac{\dd \dot \theta_1}{\dd \theta_+}=&\;\;0\\
\ds\left(-1\pm \frac{ \dot \theta_1 \cos \theta_1}{\dot y_1}\right)\ds\frac{\dd \theta_1}{\dd \dot \theta_+} -&\ds\left(\frac{(\dot y_1)^2-(\dot \theta_1)^2}{g \dot y_1}\right)&\ds\frac{\dd \dot \theta_1}{\dd \dot \theta_+}=&\;\ds-\frac{(\dot y_+)^2-(\dot \theta_+)^2}{g \dot y_+}\\
&\ds-\frac{\dot \theta_1}{g} \;\;&\ds\frac{\dd \dot \theta_1}{\dd \dot \theta_+} =&\;\ds-\frac{\dot \theta_+}{g},
\end{array}
\end{align*}
where the $\pm$ signs depend on the sign of $\frac{\sin \theta_1}{|\sin \theta_1|}$. Keeping in mind that $\dot \theta_+=\dot \theta_1$, we solve for  
\begin{align*}
\frac{\dd \theta_1}{\dd \theta_+}= \frac{\dot y_1 (\dot y_+  - \dot \theta_+ \cos \theta_+)}{\dot y_+(\dot y_1 \mp \dot \theta_1 \cos \theta_1)}\hspace{.9cm}\frac{\dd \dot \theta_1}{\dd \theta_+}=0 \hspace{.9cm}\frac{\dd \theta_1}{\dd \dot \theta_+}=\frac{(\dot y_+ -\dot y_1)(\dot y_+ \dot y_1+ (\dot \theta_1)^2)}{g \dot y_+ ( \dot y_1 \mp \dot \theta_1 \cos \theta_1)}\hspace{.9cm}\frac{\dd \dot \theta_1}{\dd \dot \theta_+}=1.
\end{align*}

We use the chain rule to compute
\begin{align*}
\mathbf J_f&=\begin{bmatrix}
\ds\frac{\dd \theta_1}{\dd \theta_{\;\;}} &  \ds\frac{\dd \theta_1}{\dd \dot \theta_{\;\;}}\vspace{.2cm}\\
\ds\frac{\dd \dot \theta_1}{\dd \theta_{\;\;}}&  \ds\frac{\dd \dot \theta_1}{\dd \dot\theta_{\;\;}}
\end{bmatrix}
=\begin{bmatrix}
\ds\frac{\dd \theta_1}{\dd \theta_+}\frac{\dd \theta_+}{\dd \theta_{\;\;}} + \frac{\dd \theta_1}{\dd \dot \theta_+}\frac{\dd \dot \theta_+}{\dd \theta_{\;\;}} &  \ds\frac{\dd \theta_1}{\dd \theta_+}\ds\frac{\dd \theta_+}{\dd \dot \theta_{\;\;}}+\frac{\dd \theta_1}{\dd \dot \theta_+}\ds\frac{\dd \dot \theta_+}{\dd \dot \theta_{\;\;}} \vspace{.2cm}\\
\ds\frac{\dd \dot \theta_1}{\dd \theta_+}\frac{\dd \theta_+}{\dd \theta_{\;\;}} + \frac{\dd \dot \theta_1}{\dd \dot \theta_+}\frac{\dd \dot \theta_+}{\dd \theta_{\;\;}} &  \ds\frac{\dd \dot \theta_1}{\dd \theta_+}\ds\frac{\dd  \theta_+}{\dd \dot \theta_{\;\;}}+\frac{\dd \dot \theta_1}{\dd \dot \theta_+}\ds\frac{\dd \dot \theta_+}{\dd \dot \theta_{\;\;}}
\end{bmatrix}.
\end{align*}
For $(\theta, \dot \theta)$ in $D_L$, 
\begin{align}
\begin{array}{l l l}
\hspace{.75cm}\dot \theta=\pm O\big(\frac{1}{\sqrt{E}}\big) & \hspace{.3cm}\dot y_+=\;\;\,O(\sqrt{E}) &\theta = \frac{\pi}{2}\pm O\big(\frac{1}{E}\big) \\
\dot \theta_+, \dot \theta_1=\pm O\big(\frac{1}{\sqrt{E}}\big) & \dot y, \dot y_1=-O(\sqrt{E}) &\theta_1 = O(1).
\end{array}
\end{align} 
Using (10), we estimate the Jacobian $\mathbf J_f$ for the points in $D_L$
\begin{align}
(\mathbf J_f )|_{D_L}&=\begin{bmatrix}
O(1) \cdot 1 + O(\sqrt{E})O(\sqrt{E}) &  O(1)\cdot 0 + O(\sqrt{E}) \cdot 1\\
0 \cdot 1  +1 \cdot O(\sqrt{E})&  0 \cdot 0 + 1 \cdot O(1)
\end{bmatrix}
=\begin{bmatrix}
O(E) &  O(\sqrt{E})\\
O(\sqrt{E})&  O(1)
\end{bmatrix}.
\end{align}
Any vector $(1,0)^T$ in $D_L$ is mapped to $(\mathbf J_f)|_{D_L}(1,0)^T = (O(E), O(\sqrt{E}))^T$. It is clear that the image of the top edge of $D_L$ is a $\mu_h$-horizontal curve with $\mu_h=O\big(\frac{1}{\sqrt{E}}\big)$. Moreover, since we were careful with the signs of the asymptotic terms, it is a monotonically increasing $\mu_h$- horizontal curve with $\mu_h = O\big(\frac{1}{\sqrt{E}}\big)$. 
\end{proof}
\begin{figure}[h]
\centering \includegraphics*[width=6in ]{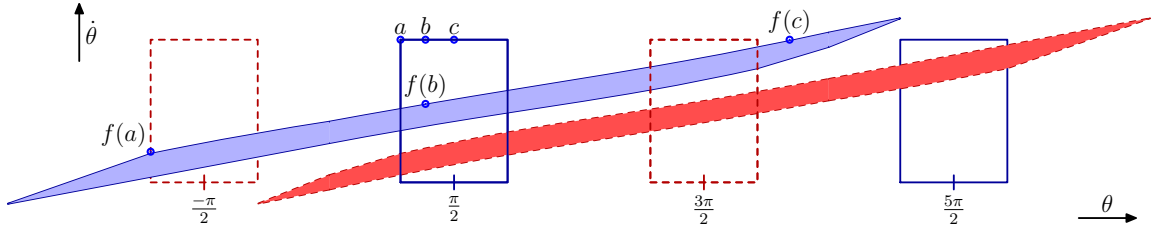}
\caption{A topological picture of $f(D_L \cup D_R)$.}
\end{figure}

Now we consider three reference points $a, b$, and $c$ lying on the top edge of $D_L$ (Figure 5),
\begin{align*}
a=(\theta^a, \dot{\theta}^a)=\Big(\textstyle \frac{\pi}{2} - \frac{k}{E},\frac{\sqrt{2}k}{\sqrt{E}}\Big) \hspace{.7cm} b=(\theta^b, \dot{\theta}^b)=\Big( \theta^b, \frac{\sqrt{2}k}{\sqrt{E}}\Big) \hspace{.7cm} c=(\theta^c, \dot{\theta}^c)=\Big( \frac{\pi}{2}, \frac{\sqrt{2}k}{\sqrt{E}}\Big),
\end{align*}
where ${\theta}^b$ is chosen such that $\dot \theta_+^b=\dot{\theta}_1^b=0$. The existence of such $\theta^b$ can be shown by applying the intermediate value theorem to the function $\dot \theta_1\left(\theta, \sqrt{2E}\frac{k}{E}\right)$. See also Figure 6 for a geometric reason. 

\begin{prop}
The image of $D$ overlaps $D \tiny{\pm}(2\pi \mathbb{Z}, 0)$ in six disjoint $\mu_h$-horizontal strips with $\mu_h=O\big(\frac{1}{\sqrt{E}}\big)$ (Figure 5).
\end{prop}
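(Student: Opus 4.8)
The plan is to reduce the statement to a description of $f(D_L)$ alone — by the coin's symmetry $f(D_R)$ is just a mirrored shift of it — to identify $f(D_L)$ as a single long, nearly horizontal strip that winds through three consecutive translates of $\widetilde D$, and then to read off the intersections. Concretely, the billiard flow on $\mathcal Q$ commutes with the translation $T\colon(\theta,y)\mapsto(\theta+2\pi,y)$ and, as one checks straight from (4), with the two reflections $\rho\colon(\theta,\dot\theta)\mapsto(\pi-\theta,-\dot\theta)$ and $\sigma\colon(\theta,\dot\theta)\mapsto(-\theta,-\dot\theta)$ of $\mathcal P_d$. Since $\rho$ fixes $D_L$ while interchanging its top and bottom edges, and $T\circ\sigma$ carries $D_L$ onto $D_R$, it is enough to find $f(D_L)$; then $f(D_R)=T\sigma\big(f(D_L)\big)$, the ``shifted by $\pi$'' picture mentioned earlier.

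Next I would pin down $f(D_L)$ using the boundary curves and the reference points $a,b,c$. By Lemma 4.4 the image $\gamma_t$ of the top edge of $D_L$ is a monotone increasing $\mu_h$-horizontal curve with $\mu_h=O(1/\sqrt E)$, and applying $\rho$ shows the image of the bottom edge is $\gamma_b=\rho(\gamma_t)$, the point reflection of $\gamma_t$ in $(\tfrac\pi2,0)$, hence also a monotone increasing $\mu_h$-horizontal curve. Lemma 4.2 places $f(a)$ exactly on the left edge of $D_R-(2\pi,0)$; redoing that asymptotic computation, which uses (6)--(7), at the other three corners of $D_L$, and using $b$ (where $\dot\theta_+=\dot\theta_1=0$, so $f(b)$ lies on $\dot\theta=0$ inside $D_L$) and $c$, I would locate both endpoints of $\gamma_t$ and of $\gamma_b$. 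Combined with the monotonicity and the Jacobian orders (10)--(11) governing the images of the two vertical edges of $D_L$, this shows that $\gamma_t$ and $\gamma_b$ enter the strip $\{|\dot\theta|\le\sqrt2\,k/\sqrt E\}$, run monotonically across the $\theta$-intervals beneath $D_R-(2\pi,0)$, $D_L$ and $D_R$ — low in the first, near $\dot\theta=0$ in the second, high in the third — and then leave it, while the images of the vertical edges stay outside that strip except for touching the two extreme translates in a single corner each; this is the picture of Figure 5.

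It then follows that inside each of these three translates the set $f(D_L)$ is exactly the region between the relevant sub-arcs of $\gamma_b$ and $\gamma_t$ — near a corner of a translate, one sub-arc may have to be completed by a segment of that translate's horizontal edge, which only lowers the Lipschitz constant — so it is a full-width $\mu_h$-horizontal strip with $\mu_h=O(1/\sqrt E)$. Applying $T\sigma$ gives three more such strips from $f(D_R)$. The three strips lying in $D_L$ sit at the $\dot\theta$-levels near $+\sqrt2\,k/\sqrt E$, $0$, and $-\sqrt2\,k/\sqrt E$, each far thinner than the separation of those levels, hence pairwise disjoint; the same holds in $D_R$, and strips in distinct translates are disjoint automatically. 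This produces the six disjoint $\mu_h$-horizontal strips.

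The step I expect to be the real obstacle is the middle one: showing that $f(D_L)$ meets precisely the three translates $D_R-(2\pi,0)$, $D_L$, $D_R$ and, above all, that it sweeps across the full $\theta$-width of each. The box and the thin strip $f(D_L)$ (at a fixed height) both have $\theta$-extent of order $1/E$, so this is not a question of orders of magnitude but of matching constants; it is precisely what the calibrated choice of $k$ in Lemma 4.2, together with the matching half-width $\theta^*=k/E$, is designed to force, and a rigorous argument must track the sign-sensitive lower-order terms in the images of the four corners of $D_L$, beyond the order bound $\mu_h=O(1/\sqrt E)$.
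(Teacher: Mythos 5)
Your proposal is correct and follows essentially the same route as the paper: reduce to $f(D_L)$ by symmetry, use Lemma 4.4 for the $\mu_h=O\big(\frac{1}{\sqrt{E}}\big)$ slope of the edge images, and locate the images of the calibrated reference points $a$, $b$, $c$ (via Lemma 4.2 and the inequality $\theta^c_1\geq \frac{3\pi}{2}+\frac{k}{E}$) to see that the image sweeps across $D_R-(2\pi,0)$, $D_L$, and $D_R$. You also correctly identify the crux — that the full-width crossing is a matter of matched constants forced by the choice of $k$, not of asymptotic orders — which is exactly the role Lemma 4.2 plays in the paper's argument.
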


\begin{proof}

We estimate the images of the three reference points $a,b,$ and $c$ in $D_L$
\begin{align*}
f(a)=(\theta^a_1, \dot \theta^a_1) \hspace{1cm} f(b)=(\theta^b_1, \dot \theta^b_1) \hspace{1cm} f(c)=(\theta^c_1, \dot \theta^c_1).
\end{align*}
From (9), we know $\dot \theta^a_1= \dot \theta_+^a =-\frac{\sqrt{2}k}{\sqrt{E}}+O\big(\frac{k}{E^{1.5}}\big)$. To find $\theta_a^1$, we use the first term of $A(E,k)$ from Lemma 4.2. The term represents twice the $\theta$-distance from $\theta_a$ to the vertex of the parabolic billiard trajectory determined by $(\theta^a, \dot \theta^a)$, and it is set to $-\pi$. In other words, the next collision occurs when \[\theta_a^1 = \theta_a - \pi= -\frac{\pi}{2}-\frac{k}{E}.\]

\begin{figure}[h]
\centering \includegraphics*[width=5in ]{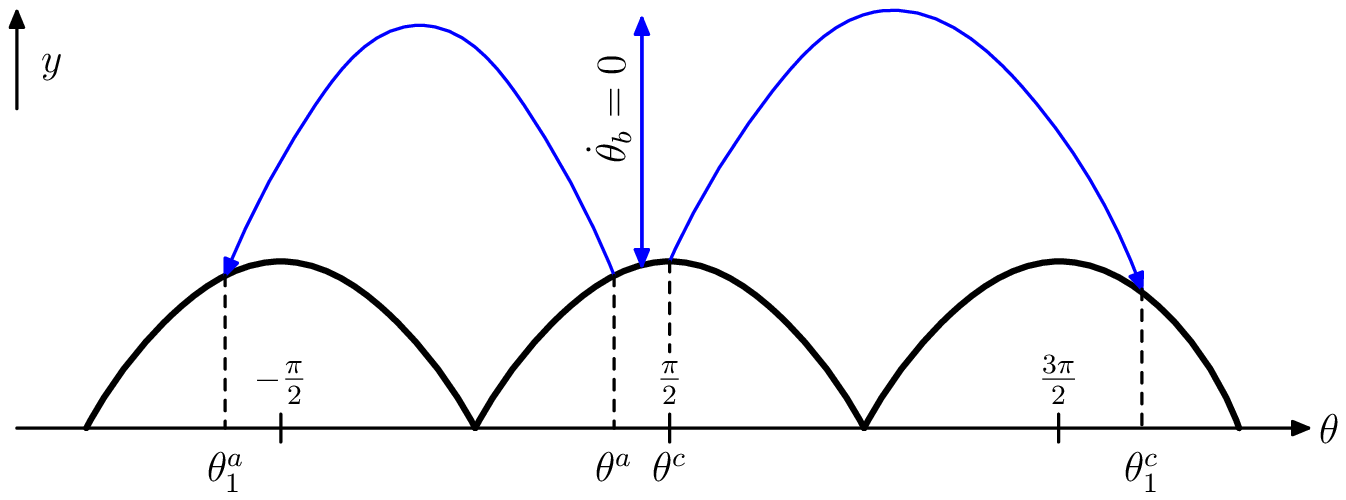}
\caption{The trajectories defined by the initial points $a,b,$ and $c$ on $\mathcal Q$.}

\end{figure}

By construction, \[f(b)=(\theta^b_1, \dot \theta^b_1)=(\theta^b,0).\]
Since the point $c$ is associated to the trajectory hitting the boundary at the peak (Figure 6), the $\dot \theta$-component does not change after the collision, and $\dot \theta^c_1=\sqrt{2E}\frac{k}{E}$. To estimate $\theta^c_1$, we observe that the parabolic billiard trajectory determined by $(\theta^c, \dot \theta^c)$ has the footpoint $q^c=(\theta^c, y^c)=(\frac{\pi}{2}, 1)$ on $\dd \mathcal {Q}$. Any footpoint on $\dd \mathcal {Q}$, including the footpoint for the next collision $q^c_1=(\theta^c_1, y^c_1) $, has the $y$-coordinate less than or equal to $1$. This implies that $y$- and $\theta$-distances from $q^c$ to the vertex of the parabola is smaller than the $y$- and $\theta$-distances from $q^c_1$ to the vertex of the parabola. In other words, 
\begin{align}
\theta^c_1& \geq \theta^c+\frac{2}{g}\left(\dot{\theta}_+^c \sqrt{2E-2g |\sin \theta^c| -(\dot{\theta}_+^c)^2}\right).
\end{align}
We use (9) to get the relation $\dot \theta^c =\frac{\sqrt{2}k}{\sqrt{E}}=|\dot \theta^a |+O\big(\frac{k}{E^{1.5}}\big)$. Then we rewrite (12) as
\begin{align*}
\theta^c_1&\geq\frac{\pi}{2}+\frac{2}{g}\left(\Big(|\dot{\theta}_+^a| + O\Big(\frac{k}{E^{1.5}}\Big)\Big) \sqrt{2E-2g \cos \frac{k}{E} -2g\Big(1-\cos\frac{k}{E}\Big) -\Big(|\dot{\theta}_+^a| + O\Big(\frac{k}{E^{1.5}}\Big)\Big)^2}\right)\notag\\
&=\frac{\pi}{2}+\frac{2}{g}\left(\Big(|\dot{\theta}_+^a| + O\Big(\frac{k}{E^{1.5}}\Big)\Big)\sqrt{2E-2g \sin \theta^a-(\dot{\theta}_+^a)^2  -2g\Big(1-\cos\frac{k}{E}\Big) -  O\Big(\frac{k^2}{E^2}\Big)}\right)\notag\\
&= \frac{\pi}{2}+\frac{2}{g}\left( \Big(|\dot \theta^a |+O\Big(\frac{k}{E^{1.5}}\Big)\Big) \left(\sqrt{2E-2g \sin \theta^a -(\dot \theta_+^a)^2)}- O\Big(\frac{1}{E^{0.5}}\Big)\right)\right)\notag \\
&= \frac{\pi}{2}+\frac{2}{g}\left( |\dot \theta^a |\sqrt{2E-2g \sin \theta^a -(\dot \theta_+^a)^2)}\right)+ \textrm{smaller terms},
\end{align*}
where \begin{align}\textrm{smaller terms}= \frac{2}{g}\left(-|\dot \theta^a |O\Big(\frac{1}{E^{1.5}}\Big)+O\Big(\frac{k}{E^{1.5}}\Big)\sqrt{2E - 2g \sin \theta^a - (\dot \theta_+^a)^2} - O\Big(\frac{k}{E^2}\Big)\right).\end{align}
It can be shown that (13) is $O\big(\frac{k}{E}\big)$ and is greater than or equal to $\frac{k}{E}$. Since we know from (8) that $\frac{2}{g}( |\dot \theta^a |\sqrt{2E-2g \sin \theta^a -(\dot \theta_+^a)^2)})=A(E, k)=\pi$, we conclude that $\theta_c = \frac{3\pi}{2} + \frac{k}{E}+O\big(\frac{k}{E}\big)$.

We plot the images of the three reference points $f(a), f(b),$ and $f(c)$ (Figure 5). Then by Lemma 4.4, we know that the image of the top side of $D_L$ horizontally crosses $D_L, D_R$ and $D_R \tiny{-}(2\pi, 0)$. Similarly, using symmetries, we infer that the image of the bottom edge of $D_L$ also crosses $D_L, D_R$ and $D_R\tiny{-} (2\pi, 0)$ horizontally. Since $f$ is continuous and the proof of Lemma 4.4 can be applied to any vector $(1,0)^T$ lying in $D_L$, we deduce that the image of $D_L$ is $\mu_h$-horizontal strip crossing $D_L, D_R$ and $D_R \tiny{-} (2\pi, 0)$ horizontally. The same argument works for $D_R$, and we conclude that $f(D)$ on $D\tiny{\pm}(2\pi\mathbb{Z}, 0)$ are six horizontal strips. 
\end{proof}

\section{Construction of the strips satisfying the Conley-Moser conditions}
In this section, we finally construct the horizontal and vertical strips satisfying the Conley-Moser conditions using the coin billiard map. Recall that we measured the angular position of the coin $\theta \in \mathbb{R}$ distinguishing $\theta$ from $\theta + 2\pi\mathbb{Z}$. We now impose the equivalence relation $\theta \sim \theta + 2\pi\mathbb{Z}$ and treat $\theta=\theta + 2\pi\mathbb{Z}$ as the same. We denote the resultant objects with $\widetilde{\;\;}$, i.e. $\widetilde{\mathcal{Q}}, \widetilde{\mathcal{P}_d}, \widetilde{D}, \tilde{f}$. 
\subsection{The Conley-Moser conditions for the coin billiard map $\tilde f$.}
Under this setting where $\theta \in S^1$, $\widetilde{D}_L$ and $\widetilde{D}_R$ lie on a cylinder. It directly follows from Proposition 4.5 and Figure 5 that 
\begin{corollary}
$\tilde f(\widetilde D) \cap \widetilde D$ are six $\mu_h$-horizontal strips with $\mu_h=O\big(\frac{1}{\sqrt{E}}\big)$ (Figure 1a).
\end{corollary}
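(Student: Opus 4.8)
The plan is to obtain the corollary simply as the projection of Proposition~4.5 to the quotient cylinder, so that almost all of the work is bookkeeping.

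First I would record that $\tilde f$ is genuinely well defined on the cylinder. The billiard table $\mathcal Q=\{y\ge|\sin\theta|\}$ and the equation of motion~(2) are invariant under the deck transformation $T\colon(\theta,\dot\theta)\mapsto(\theta+2\pi,\dot\theta)$, so the billiard flow, hence the return map $f$, commutes with $T$; therefore $f$ descends to a diffeomorphism $\tilde f$ of $\widetilde{\mathcal P_d}=\mathcal P_d/T$, and the covering projection $p\colon\mathcal P_d\to\widetilde{\mathcal P_d}$ restricts to a bijection of each translate $D+(2\pi n,0)$ onto $\widetilde D$, carrying $D_L+(2\pi n,0)$ onto $\widetilde D_L$ and $D_R+(2\pi n,0)$ onto $\widetilde D_R$.

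Next I would unwind Proposition~4.5. Its proof shows that $f(D_L)$ meets exactly the three rectangles $D_L$, $D_R$, $D_R-(2\pi,0)$, in a single $\mu_h$-horizontal strip each, and by the $\pi$-shift symmetry of the coin $f(D_R)$ meets exactly $D_R$, $D_L$, $D_L+(2\pi,0)$. Pushing these six strips forward by $p$ and sorting them by which rectangle of $\widetilde D$ they land in: inside $\widetilde D_L$ one obtains the strip $\tilde f(\widetilde D_L)\cap\widetilde D_L$ together with the two strips $p\bigl(f(D_R)\cap D_L\bigr)$ and $p\bigl(f(D_R)\cap(D_L+(2\pi,0))\bigr)$ coming from $\tilde f(\widetilde D_R)$; symmetrically, inside $\widetilde D_R$ one obtains $\tilde f(\widetilde D_R)\cap\widetilde D_R$ and two strips coming from $\tilde f(\widetilde D_L)$. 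Hence $\tilde f(\widetilde D)\cap\widetilde D$ consists of six pieces.

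Finally I would check these six pieces are disjoint $\mu_h$-horizontal strips with $\mu_h=O(1/\sqrt E)$. Each of the six strips in the plane is a graph over a $\theta$-interval of length $2k/E<\pi$, so it does not wind around the cylinder, and $p$ restricted to it is a horizontal translation by a multiple of $2\pi$; since the Lipschitz constant of a graph and the inequalities defining a $\mu_h$-horizontal strip are invariant under horizontal translation, the constant stays $O(1/\sqrt E)$. Strips landing in different rectangles of $\widetilde D$ are trivially disjoint; for the three strips inside $\widetilde D_L$ (and likewise the three inside $\widetilde D_R$) I would read off their vertical ordering from the $\dot\theta$-coordinates of $f(a)$, $f(b)$, $f(c)$ and their $\pi$-shifted analogues computed in the proof of Proposition~4.5, which place the three strips at separated $\dot\theta$-levels. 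This last point is the only genuine obstacle: in the plane the six strips sit in four different fundamental copies and are trivially separated, but after wrapping three of them crowd into $\widetilde D_L$ and three into $\widetilde D_R$, so one must confirm that no two overlap on the cylinder --- which is exactly the content of Figure~1a and is forced by the reference-point estimates already in hand.
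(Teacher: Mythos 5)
Your proposal is correct and follows essentially the same route as the paper, which simply deduces the corollary from Proposition~4.5 by passing to the quotient under $\theta\sim\theta+2\pi\mathbb{Z}$ (the paper's entire justification is ``It directly follows from Proposition 4.5 and Figure 5''). Your write-up is in fact more careful than the paper's, since you make the covering-space bookkeeping explicit and correctly flag the one point the paper leaves to the figure, namely that the three strips landing in each of $\widetilde D_L$ and $\widetilde D_R$ remain pairwise disjoint after wrapping.
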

We denote the six horizontal strips $H_s$ where $s\in S=\{L_1, L_2, L_3, R_1, R_2, R_3\}$ as illustrated in Figure 1a. 
\begin{prop}
The preimages of $H_s$ are six disjoint $\mu_v$-vertical strips $V_s \in \widetilde D$ with $\mu_v=O\big(\frac{1}{\sqrt{E}}\big)$. 
\end{prop}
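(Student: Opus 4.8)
\noindent\emph{Proof plan.}
The idea is to obtain each $V_s$ by pulling the horizontal strip $H_s$ of Corollary 5.2 back through $\tilde f$, and to read off the vertical--curve estimate from the inverse of the Jacobian $\mathbf J_f$ of Lemma 4.4. I use the product coordinates in which $\theta$ is the horizontal variable and $\dot\theta$ the vertical one, so that $\widetilde D_L$ and $\widetilde D_R$ each have horizontal edges $\{\dot\theta=\pm\sqrt 2\,k/\sqrt E\}$ and vertical edges $\{\theta=\mathrm{const}\}$.

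\smallskip
First I would pin down the boundary structure of the $H_s$. By Proposition 4.5, $\tilde f(\widetilde D_L)$ and $\tilde f(\widetilde D_R)$ are $\mu_h$--horizontal strips each of whose intersections with a component of $\widetilde D$ is a full horizontal crossing, so every $H_s$ is a topological rectangle whose top and bottom are subarcs of $\tilde f(\text{a horizontal edge of the source rectangle})$ and whose two sides are either subsegments of the vertical edges $\{\theta=\mathrm{const}\}$ of the target rectangle or --- at the two ``tips'' of the wrapped strip, the situation arranged by Lemma 4.2 --- the $\tilde f$--images of vertical edges of the source rectangle. Since $\mathbf J_f$ of (11) is nonsingular on $\widetilde D$ (a direct computation from the derivatives in the proof of Lemma 4.4 gives $\det\mathbf J_f=1+O(1/E)$, in keeping with the transversality of our Poincar\'e sections), $\tilde f$ restricts to an embedding of each of $\widetilde D_L,\widetilde D_R$ onto its image in Figure 5; hence for each $s$ the set $V_s:=\tilde f^{-1}(H_s)\cap\widetilde D$ is a single connected region inside one of $\widetilde D_L,\widetilde D_R$, with $\tilde f\colon V_s\to H_s$ a homeomorphism and $\partial V_s=\tilde f^{-1}(\partial H_s)$.

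\smallskip
Next I would identify the four boundary arcs of $V_s$. The preimage of the top (resp.\ bottom) boundary of $H_s$ is a subarc of a horizontal edge of the source rectangle, hence lies on $\{\dot\theta=+\sqrt2\,k/\sqrt E\}$ (resp.\ $\{\dot\theta=-\sqrt2\,k/\sqrt E\}$), so $V_s$ reaches the full top and full bottom of its rectangle, i.e.\ it spans the whole $\dot\theta$--interval. The preimages of the two sides of $H_s$ are curves $\gamma_1^{s},\gamma_2^{s}\subset\widetilde D$, and $V_s$ is the region between them. To see that $\gamma_i^{s}$ is a $\mu_v$--vertical curve with $\mu_v=O(1/\sqrt E)$: if it arises from an edge $\{\theta=\mathrm{const}\}$ it is that vertical line ($\mu_v=0$); otherwise it is $\tilde f^{-1}$ of a segment $\{\theta_1=\mathrm{const}\}$, whose unit tangent $(0,1)$ in the image coordinates pulls back to
\[
\mathbf J_f^{-1}\begin{pmatrix}0\\1\end{pmatrix}=\frac{1}{\det\mathbf J_f}\begin{pmatrix}-(\mathbf J_f)_{12}\\(\mathbf J_f)_{11}\end{pmatrix}.
\]
By (11), $(\mathbf J_f)_{12}=O(\sqrt E)$ while $(\mathbf J_f)_{11}=\Theta(E)$ --- the $O(\sqrt E)\!\cdot\!O(\sqrt E)$ summand in the $(1,1)$ entry of (11) is the dominant one and, with signs tracked as in Lemma 4.4, does not cancel --- and $\det\mathbf J_f=1+O(1/E)$; so the pulled--back tangent is $\big(O(\sqrt E),\,\Theta(E)\big)$, its $\dot\theta$--component never vanishing and dominating its $\theta$--component by a factor $\Theta(\sqrt E)$. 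Hence $\gamma_i^{s}$ is the graph of a function $\theta=v_i^{s}(\dot\theta)$ over the full interval $[-\sqrt2\,k/\sqrt E,\sqrt2\,k/\sqrt E]$ with $|(v_i^{s})'|=O(1/\sqrt E)$ pointwise, which integrates to the Lipschitz bound of Definition 3.2 with $\mu_v=O(1/\sqrt E)$; and $\gamma_i^{s}\subset V_s\subset\widetilde D$ gives the range condition. So each $V_s$ is a $\mu_v$--vertical strip with $\mu_v=O(1/\sqrt E)$, and the six are pairwise disjoint because the six $H_s$ are (Corollary 5.2), $\tilde f$ is injective on each of $\widetilde D_L$ and $\widetilde D_R$, and $\widetilde D_L\cap\widetilde D_R=\varnothing$.

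\smallskip
The step I expect to need the most care is uniformity of these estimates: one must check that $\det\mathbf J_f$ is bounded away from $0$ and $(\mathbf J_f)_{11}\asymp E$ at \emph{every} point of $\widetilde D$, not only at the reference points $a,b,c$, so that every pull--back of a vertical direction is $O(1/\sqrt E)$--close to vertical and $\gamma_i^{s}$ is genuinely a graph over the full $\dot\theta$--interval; this, together with the global (not merely local) injectivity of $\tilde f$ on each rectangle, is what keeps $V_s$ a single vertical strip rather than a union of pieces. Both follow from the standing regime $\theta^*=O(1/E)$, $\dot\theta^*=O(1/\sqrt E)$ of Section 4, under which the estimates in the proof of Lemma 4.4 hold pointwise throughout $\widetilde D$.
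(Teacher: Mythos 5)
Your proposal is correct and follows essentially the same route as the paper: the preimages of the top and bottom boundaries of $H_s$ are identified as segments of the horizontal edges of the source rectangle, the preimages of the vertical sides are obtained by pulling back the tangent $(0,1)^T$ through $\mathbf J_{\tilde f}^{-1}$ using the entries of (11) and the $O(1)$ determinant to get slope $O\big(\frac{1}{\sqrt{E}}\big)$, and the diffeomorphism property of $\tilde f$ closes the argument. Your explicit attention to the nonvanishing of the $\Theta(E)$ entry and to uniformity over all of $\widetilde D$ is a welcome sharpening of points the paper leaves implicit, but it does not change the method.
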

\begin{proof}
Without loss of generality, we choose $H_{L_2}$ to find its preimage. 
From the construction of the horizontal strips $H_s$, the inverse images of the upper and lower boundary curves of $H_{L_2}$ are segments of the top and the bottom edges of $\widetilde D_L$. To find the inverse image of the vertical boundaries of $H_{L_2}$, we compute  $\mathbf{J}_{\tilde f^{-1}}(0, 1)^T$.  By the inverse function theorem, $\mathbf{J}_{\tilde f^{-1}}= (\mathbf{J}_{\tilde f} )^{-1}$. Since $f$ and $\tilde f$ are locally the same, we may use (11) and the equations in the proof of Lemma 4.4 to compute $\mathbf{J}_{\tilde f^{-1}}$. First, by direct computation, we find the determinant of $\mathbf J_{\tilde f}$ 
\[ |\mathbf J_{\tilde f}|=\frac{\dot y_1 (\dot y_+ - \dot \theta_1 \cos \theta)(-2\dot{\theta}\cos \theta + \dot{y} \sin^2 \theta)}{\dot{y}  \dot{y}_+ (1+ \cos^2 \theta) (\dot{y}_1 \mp \dot{\theta}_1 \cos \theta_1)}. \] 
Using (10), we find $ |\mathbf J_{\tilde f}| = O(1)$ for the points in $\widetilde D_L$. Thus, 
\[(\mathbf{J}_{\tilde f^{-1}} )|_ {H_{L_2}}= \ds\frac{1}{O(1)} \begin{bmatrix} O(1) &-O(\sqrt{E})\\ -O(\sqrt{E}) & O(E)\end{bmatrix},\] and $(\mathbf{J}_{\tilde f^{-1}} )|_ {H_{L_2}}(0,1)^T=(-O(\sqrt{E}), O(E))^T$. We then obtain $\mu_v=O\big(\frac{1}{\sqrt{E}}\big)$.

To summarize, the preimages of the boundaries of $H_{L_2}$ consist of two segments of the top and the bottom edges of $\widetilde D_L$ and two $\mu_v$-vertical curves in $\widetilde D_L$. Since $\tilde f^{-1}$ is a diffeomorphism, the boundaries get mapped to the boundaries and the interior gets mapped to the interior.  Therefore the inverse image of a horizontal strip $H_{L_2}$ is a vertical strip. 

\end{proof}

\begin{prop}
\leavevmode
\begin{enumerate}
\item Consider a $\mu_h$-horizontal strip $H_s$ for $s \in \{L_1, L_2, L_3\}$. If $j\in \{R_1, L_2, R_3\}$, then  $\tilde f(H_s) \cap H_j$ is a $\mu_h^1$-horizontal strip where $\mu_h^1=O\big(\frac{1}{\sqrt{E}}\big)$ and $d(\tilde f(H_s) \cap H_j) < d(H_s)$. If $j \in \{L_1, R_2, L_3\}$, then $\tilde f(H_s) \cap H_j=\emptyset$. 

\item Similary, consider a $\mu_v$-vertical strip $V_s$ for $s \in \{R_1, L_2, R_3\}$. If $j \in \{L_1, L_2, L_3\}$, then $\tilde f^{-1}(V_s) \cap V_j$ is a $\mu_v^1$-vertical where $\mu_v^1=O\big(\frac{1}{\sqrt{E}}\big)$ and $d(\tilde f^{-1}(V_s) \cap V_j) < d(V_s)$. If $j \in \{R_1, R_2, R_3\}$, then $\tilde f^{-1}(V_s) \cap V_j=\emptyset$.

\item The statements 1 and 2 with $L$ and $R$ switched are also true. \end{enumerate}
\end{prop}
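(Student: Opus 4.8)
The plan is to verify the third Conley-Moser condition CM3 by combining the Jacobian estimates already established with the topological picture of $\tilde f(\widetilde D)$ from Proposition 4.5 and Corollary 5.1. Since parts 1 and 2 are related by the inverse function theorem (applied exactly as in the proof of Proposition 5.3) and part 3 follows from the left-right symmetry of the coin (the reflection $\theta \mapsto -\theta$, or the shift by $\pi$, conjugates $\tilde f$ with the roles of $L$ and $R$ interchanged, just as used throughout Section 4.2), it suffices to prove part 1 for a single representative, say $s = L_2$; the cases $s = L_1, L_3$ are identical because the Jacobian estimate (11) is uniform over all of $\widetilde D_L$.

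First I would address the emptiness claims. A $\mu_h$-horizontal strip $H_s \subset \widetilde D_L$ is squeezed between two segments of the top and bottom edges of $\widetilde D_L$, so its image $\tilde f(H_s)$ is a sub-strip of $\tilde f(\widetilde D_L)$, lying between the images of those edge-segments. By Proposition 4.5 and Figure 5, $\tilde f(\widetilde D_L)$ is one long monotone $\mu_h$-horizontal strip that crosses exactly $\widetilde D_L$ (giving $H_{L_2}$), $\widetilde D_R$ (giving $H_{R_1}$) and $\widetilde D_R - (2\pi, 0) \sim \widetilde D_R$ (giving $H_{R_3}$) — and it does not return to cover $H_{L_1}$ or $H_{L_3}$, nor does it reach $H_{R_2}$. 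Hence $\tilde f(H_s)$ can only meet $H_j$ for $j \in \{R_1, L_2, R_3\}$, which disposes of the $j \in \{L_1, R_2, L_3\}$ case.

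Next, for $j \in \{R_1, L_2, R_3\}$, I would check that $\tilde f(H_s) \cap H_j$ is again a $\mu_h^1$-horizontal strip with $\mu_h^1 = O(1/\sqrt{E})$: its upper and lower boundaries are arcs of $\tilde f(h_1)$ and $\tilde f(h_2)$ where $h_1, h_2$ are the bounding horizontal curves of $H_s$, and by Lemma 4.4 (whose proof applies verbatim to any vector $(1,0)^T$ in $\widetilde D_L$, since $\mathbf J_{\tilde f}$ maps it to $(O(E), O(\sqrt{E}))^T$) these images are monotone $\mu_h$-horizontal curves with slope $O(1/\sqrt{E})$; the vertical boundaries of the intersection are sub-arcs of the vertical boundaries of $H_j$, which are $\mu_v$-vertical curves. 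This is exactly the structure of a horizontal strip. The width bound $d(\tilde f(H_s) \cap H_j) < d(H_s)$ is where the real content lies: the Jacobian (11) shows that $\tilde f$ expands in the $(1,0)$ direction by a factor $O(E)$ and the vertical extent of the image is controlled by the off-diagonal and lower-right entries, so a vertical segment of length $\ell$ in $H_s$ maps to a set whose $\dot\theta$-extent is $O(\ell)$ with a contraction constant — more precisely, $(\mathbf J_{\tilde f^{-1}})|_{H_j}(0,1)^T = (-O(\sqrt E), O(E))^T$ from Proposition 5.3 gives that pulling a unit vertical vector back expands it, equivalently $\tilde f$ contracts vertical distances, so $d(\tilde f(H_s)\cap H_j) = O(1/E)\cdot \text{(height of }\widetilde D)$ while $d(H_s)$ is itself at least a fixed multiple of the height of $\widetilde D$ over the relevant $\theta$-interval; for $E$ large this yields strict inequality.

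The main obstacle, and the step I would write out most carefully, is the width inequality $d(\tilde f(H_s)\cap H_j) < d(H_s)$ in full rigor rather than as an order-of-magnitude heuristic: one must track that the vertical contraction factor coming from $\mathbf J_{\tilde f}$ is genuinely $O(1/\sqrt{E})$ uniformly on $H_s$ (not merely at the reference points $a,b,c$), and that the "max" in the definition of $d(\cdot)$ is not spoiled by the monotone tilting of the strips. I would handle this by using the uniform bound CM1-type estimate $\mu_h \mu_v = O(1/E) < 1$ already available, together with the standard Conley-Moser lemma that under CM1 and CM2 the map of a horizontal strip into a vertical strip automatically satisfies the width-decrease property; alternatively, a direct estimate using the mean value theorem on the $\dot\theta$-coordinate of $\tilde f$ along vertical fibers of $H_s$, bounding $|\partial \dot\theta_1/\partial \dot\theta|$ and $|\partial\dot\theta_1/\partial\theta|$ from (11), gives the bound explicitly. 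Once part 1 is established, parts 2 and 3 follow by the inverse-function and symmetry arguments indicated above.
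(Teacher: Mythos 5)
Your proposal follows essentially the same route as the paper: the emptiness claims are read off from the topological picture of $\tilde f(\widetilde D)$ (Figure 1a, Figure 5), the new slope $\mu_h^1=O\big(\frac{1}{\sqrt{E}}\big)$ comes from applying the Jacobian estimate (11) to the tangent vectors of the bounding horizontal curves, and parts 2 and 3 are dispatched by the inverse function theorem and the left--right symmetry, exactly as the paper does. You are in fact more careful than the paper on the width bound --- the paper merely asserts that $d(\tilde f(H_s)\cap H_j)\le d(H_s)$ is clear --- but note that your fallback claim that CM1 and CM2 alone force the width decrease is not correct as stated (CM3 is an independent condition); your direct estimate via the vertical expansion of $\mathbf{J}_{\tilde f^{-1}}$ is the right way to obtain the strict inequality.
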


\begin{proof}
We only prove the statement 1, since the proofs for the statements 2 and 3 are similar. By observing the topological pictures of $\tilde f(\widetilde D)$ (Figure 1a, Figure 5), we can easily see that when $s \in \{L_1, L_2, L_3\}$ and  $j \in \{L_1, R_2, L_3\}$,  we have $\tilde f(H_s) \cap H_j=\emptyset$. If $j \in \{L_1, R_2, L_3\}$, then $\tilde f(H_s) \cap H_j$ is  nonempty and is contained in $H_j$. It is clear $d(\tilde f(H_s) \cap H_j) \leq d(H_s)$. It remains to estimate the maximal slope of the horizontal boundary curves of $\tilde f(H_s) \cap H_j$. From Proposition 4.5, we know that the horizontal boundary curves of $H_s$ has the maximal slope $\mu_h=O\big(\frac{1}{\sqrt{E}}\big)$. Using (11), we get
\[(\mathbf J_{\tilde f})|_D\begin{pmatrix}1\\\mu_h\end{pmatrix} =\begin{pmatrix}O(E) + O(\sqrt{E})O\big(\frac{1}{\sqrt{E}}\big)\\ O(\sqrt{E}) + O(\frac{1}{\sqrt{E}})\end{pmatrix}=\begin{pmatrix} O(E) \\ O(\sqrt{E})\end{pmatrix}.\]
Thus, the maximal slope of the horizontal boundary curves of $f(H_s) \cap H_j$ is $\mu_h^1=\frac{O(\sqrt{E})}{O(E)} = O\big(\frac{1}{\sqrt{E}}\big)$. 

\end{proof}

\subsection{The proofs of Theorem 1.1 and 1.2}
Corollary 5.1, Proposition 5.2, and Proposition 5.3 together imply that $\tilde f$ satisfies the modified Conley-Moser conditions. Then Theorem 1.2* below is merely a version of Theorem 3.1. To state Theorem 1.2*, we need a few definitions.  

\begin{definition}
Let $\Lambda_N=\{p \;| \;\ds \bigcap_{n=-N}^{N} \tilde f^n(\widetilde D)\}$ and $\Lambda =  \ds \lim_{N\to \infty} \Lambda_N$.
\end{definition}
\begin{definition}
Let $\Sigma$ be the set of bi-infinite sequences of six symbols $L_1, L_2, L_3, R_1, R_2, R_3$ with the following rules:\begin{itemize}
\item $L_1, L_2$, or $L_3$ can precede $R_1, L_2$, or $R_3$. \vspace{-.2cm}
\item $R_1, R_2$, or $R_3$ can precede $L_1, R_2$, or $L_3$.
\end{itemize}
\end{definition}
\begin{customthm}{1.2*}
There is a homeomorphism $\phi: \Lambda \to \Sigma$ such that if we denote the shift map on $\Sigma$ as $\sigma: \Sigma \to \Sigma$, then the diagram below commutes.
\[
\large
\begin{tikzcd}[column sep=large,row sep=large]
\Lambda \arrow[swap]{d}{\phi}\arrow{r}{\tilde f} & \Lambda  \arrow{d}{ \phi} \\
\Sigma \arrow[swap]{r}{\sigma} & \Sigma
\end{tikzcd}
\] 
\end{customthm}
\begin{proof}
The proof closely follows the proof of Theorem 3. We only point out some key ideas here.

We first describe the invariant set $\Lambda$: the set of points which stay in $\widetilde D$ after infinitely many forward and backward iterations of $\tilde f$. From Corollary 5.1 and Figure 1a, we see that after one forward iteration of $\tilde f$, the invariant set $\tilde f(\widetilde D) \cap \widetilde D$ consists of the $2\cdot 3$ horizontal strips $H_s$ where $s \in S$.  From Proposition 5.3 and Figure 1a, we know that $\tilde f$ acting on $H_s$ creates $3$ nested $\mu_h^1$-horizontal strips with $\mu_h^1=O\big(\frac{1}{\sqrt{E}}\big)$ in each $H_s$. In other words, 
\[\tilde f^2(\widetilde D) \cap \tilde f(\widetilde D) \cap \widetilde D=\tilde f(H_s) \cap H_j  \textrm{ where } s,j \in S\] consists of $2 \cdot 3^2$ thinner $\mu_h^1$-horizontal strips. We define each nested strip 
\[\tilde f(H_{s_{-2}}) \cap H_{s_{-1}}=H_{s_{-2} s_{-1}} \textrm{ where }s_i \in S.\] If we iterate $\tilde f^n$, then $2 \cdot 3^n$$\mu_h^n$-horizontal strips with $\mu_h^n=O\big(\frac{1}{\sqrt{E}}\big)$ remain in $\widetilde D$. We inductively define the nested horizontal strips after $n$ iterations, \[\tilde f(H_{s_{-n} ... s_{-2}}) \cap H_{s_{-1}}= H_{s_{-n} ... s_{-1}}.\]  Similarly, from Propositions 5.2 and 5.3, we obtain $2 \cdot 3^n$ $\mu_v^*$-vertical strips with $\mu_v^n=O\big(\frac{1}{\sqrt{E}}\big)$ as the invariant set for $\tilde f^{-n}$. The nested vertical strips are defined \[\tilde f^{-1}(V_{s_1 ... s_n}) \cap V_{s_0}=V_{s_0 ... s_n}.\]  

We see that $\Lambda_N$ is the intersection of $2 \cdot 3^N$ $\mu_h^N$-horizontal strips and $2\cdot 3^N$ $\mu_v^N$-vertical strips. When $N \to \infty$, we see that $\Lambda$ is a Cantor set, the intersection of infinite number of $\mu_h^*$-horizontal curves and $\mu_v^*$-vertical curves. The first Conley-Moser condition $0 < \mu_h \mu_v = O(\frac{1}{\sqrt{E}}\big) O\big(\frac{1}{\sqrt{E}}\big) <1$ guarantees that there exists a unique intersection point of a $\mu_h$-horizontal curve and a $\mu_v$-vertical curve.

The next step is to assign a sequence $s=(... s_{-2} s_{-1} s_0 s_1...) \in \Sigma$ to each point $p \in\Lambda$ based on to which horizontal curve $H_{...s_{-2}s{-1}}$ and vertical curve $V_{s_0 s_1...}$ the point $p$ belongs. It is clear from the construction that this assignment $\phi: \Sigma \to \Lambda$ is one-to-one. Also, from Proposition 5.3, we know that only certain choices of $s,j$ for $f(H_s) \cap H_j=H_{s j}$ are valid. Thus, the sequences $s \in \Sigma$ must combinatorially follow the two rules mentioned on the previous page. The proof can be completed by showing that $\phi$ is a homeomorphism. See \cite{moser, wiggins} for the full details. 
\end{proof}

\begin{customthm}{1.1*}
For the two-dimensional coin model, if a collision of $m_L$ or $m_R$ is labeled by $L$ or $R$ respectively, then any infinite sequence of $L$\rq{}s and $R$\rq{}s can be realized by choosing an appropriate initial condition.
\end{customthm} 
\begin{proof}
The sequence set $\Sigma$ contains all the infinite sequences of $L_1, L_2, L_3, R_1, R_2, R_3$\rq{}s which do not violate the rules. From the rules, we see that if we set the representatives $[L]=\{L_1, L_2, L_3\}$ and $[R]=\{R_1, R_2, R_3\}$, then either $[L]$ or $[R]$ may follow after $[L]$ (see also Figure 2). Similarly, after $[R]$, either $[L]$ or $[R]$ may follow. In other words, $\Sigma$ contains all possible sequences of $[L]$\rq{}s and $[R]$\rq{}s.

By Theorem 1.2*, having all sequences of $[L]$\rq{}s and $[R]$\rq{}s implies that for any collision sequence of $L$\rq{}s and $R$\rq{}s, there exists a phase point of the coin billiard which makes collisions according to the sequence.

\end{proof}

\vfill
\noindent \large \textbf{Acknowledgements}\vspace{.3cm}

\noindent \normalsize The author thanks Vadim Zharnitsky and Yuliy Baryshnikov for many useful suggestions. This work was supported in part by a gift to the Mathematics Department at the University of Illinois from Gene H. Golub.

\newpage
\bibliographystyle{alpha}

\end{document}